\DeclareSymbolFont{cyrletters}{OT2}{wncyr}{m}{n}
\numberwithin{equation}{section} \numberwithin{figure}{section}
\DeclareMathOperator{\Gal}{Gal}
\DeclareMathOperator{\Hom}{Hom}
 \DeclareMathOperator{\Val}{Val}
\DeclareMathOperator{\HH}{H} \DeclareMathOperator{\cha}{char}
\DeclareMathSymbol{\Sha}{\mathalpha}{cyrletters}{"58}
\newcommand{\OO}{\mathcal{O}}
\newcommand\FF{\mathbb{F}}
\newcommand\PP{\mathbb{P}}
\newcommand\ZZ{\mathbb{Z}}
\newcommand\NN{\mathbb{N}}
\newcommand\QQ{\mathbb{Q}}
\newcommand\RR{\mathbb{R}}
\newcommand\CC{\mathbb{C}}
\newtheorem{lemma}{Lemma}
\newtheorem{theorem}[lemma]{Theorem}
\newtheorem{proposition}[lemma]{Proposition}
\theoremstyle{definition}
\newtheorem{definition}[lemma]{Definition}
\newtheorem{remark}[lemma]{Remark}
\numberwithin{lemma}{section}
\begin{document}

\title[The Hasse principle for lines]
{The Hasse principle for lines on diagonal surfaces}

\author{\sc J\"{o}rg Jahnel}
\address{J\"{o}rg Jahnel \\
Department Mathematik \\
Walter-Flex-Strasse 3\\
Universit\"{a}t Siegen \\
D-57072 \\
Siegen \\
Germany.}
\email{jahnel@mathematik.uni-siegen.de}
\urladdr{http://www.uni-math.gwdg.de/jahnel}

\author{\sc Daniel Loughran}
\address{Daniel Loughran \\
Leibniz Universit\"{a}t Hannover,
Institut f\"{u}r Algebra, Zahlentheorie
    und Diskrete Mathematik\\
Welfengarten 1\\
30167 Hannover\\
Germany.}
\email{loughran@math.uni-hannover.de}
\urladdr{http://www.iazd.uni-hannover.de/$\sim$loughran/}

\subjclass[2010]
{11G35 (primary), 
11R34, 
14J20, 
(secondary)}



\begin{abstract}
	Given a number field $k$ and a positive integer $d$, in this paper we consider the following
	question: does there exist a smooth diagonal surface of degree $d$ in $\PP^3$ over $k$
	which contains a line over every completion of $k$, yet no line over $k$?
	We answer the problem using Galois cohomology,
	and count the number of counter-examples using a result of Erd\H{o}s.
\end{abstract}

\maketitle

\thispagestyle{empty}


\section{Introduction} \label{sec:intro}
A class of varieties over a number field $k$
is said to satisfy the \emph{Hasse principle} if, for each variety in the class,
the existence of a rational point over every completion of $k$ implies the existence
of a rational point over $k$.
In this paper we are interested in Hasse-type principles for linear subspaces on varieties.
Namely, given some $r \in \NN$ and a class of varieties embedded in some fixed projective space,
whether the existence of a linear subspace of dimension $r$ on each variety $X$ in the class
over every completion of $k$ implies the existence of a linear subspace of dimension $r$
on $X$ defined over $k$. Note that there is a Hilbert scheme 
(see \cite{FGA}) which parametrises the linear subspaces of dimension $r$
on each such $X$, and our question is equivalent to asking whether these schemes
satisfy the (usual) Hasse principle.

Birch \cite{Bir57} studied the existence of linear subspaces 
on complete intersections of odd degree embedded in a given projective
space~$\PP^n$ over a number field $k$. He showed that there is always a linear subspace of given dimension
$r$ over $k$, provided that $r$ is sufficiently small in terms of $n$ and the degrees 
of the defining equations (see \cite{Bra14} and \cite{Die10} for recent progress).
In particular, here the Hasse principle for such linear subspaces trivially holds.

In \cite{JL15}, we considered problems of this type for lines on del Pezzo
surfaces. We showed, for example, that the Hasse principle for lines
holds for quadric surfaces in $\PP^3$ \cite[Lem.~3.4]{JL15}. For cubic surfaces however, 
it can fail, and in \cite{JL15} we gave explicit counter-examples.

In this paper, we consider this problem for lines on diagonal surfaces, i.e.~for surfaces of the form
\begin{equation} \label{eqn:diagonal}
	a_0x_0^d + a_1x_1^d + a_2x_2^d+ a_3x_3^d = 0,
\end{equation}
where $a_i \in k^*$. Here it is well-known (see e.g.~\cite[Ex.~6.3]{BS07}) that when $d\geq 3$, 
there are exactly $3d^2$ lines over $\bar k$.
Namely, let $\zeta_{2d}$ be a primitive $2d$-th root of unity and let
$\sqrt[d]{(a_i/a_j)}$  denote a fixed choice of $d$th root of $a_i/a_j$.
Then the lines over~$\bar k$ are given by
\begin{align}
	L_1^{i,j}: \quad x_0 = \zeta_{2d}^{2i+1}\cdot \sqrt[d]{(a_1/a_0)}\cdot x_1,\quad  
	&x_2 = \zeta_{2d}^{2j+1} \cdot \sqrt[d]{(a_3/a_2)}\cdot x_3.\nonumber\\
	L_2^{i,j}: \quad x_0 = \zeta_{2d}^{2i+1} \cdot \sqrt[d]{(a_2/a_0)} \cdot x_2,\quad  
	&x_1 = \zeta_{2d}^{2j+1} \cdot \sqrt[d]{(a_3/a_1)} \cdot x_3. \label{eqn:lines} \\
	L_3^{i,j}: \quad x_0 = \zeta_{2d}^{2i+1} \cdot \sqrt[d]{(a_3/a_0)} \cdot x_3,\quad  
	&x_1 = \zeta_{2d}^{2j+1} \cdot \sqrt[d]{(a_2/a_1)}\cdot x_2. \nonumber
\end{align}
Here $(i,j)$ runs over all elements of $(\ZZ/d\ZZ)^2$ (i.e. $\zeta_{2d}^{2i+1}$ runs over all 
$2d$-th roots of unity $\zeta$ for which $\zeta^d=-1$).
One quickly sees that if the Hasse principle for lines fails, then
certain combinations of elements of $k^*$ are $d$th powers locally, but not globally.
In particular this problem is of a very similar flavour to the Grunwald-Wang theorem
(see e.g.~\cite[Thm.~9.1.3(ii)]{NSW00}),
which calculates precisely when  the Tate-Shafarevich group
$$\Sha(k,\mu_d) = \ker\left(\HH^1(k,\mu_d) \to \prod_v \HH^1(k_v,\mu_d)\right)
= \ker\left(k^*/k^{*d} \to \prod_v k_v^*/k_v^{*d}\right)$$
is trivial (for example, it is always trivial when $d$ is odd).

For del Pezzo surfaces, our results were uniform with respect to the number field
(e.g.~if we were able to construct counter-examples for a given degree over one number field, then we
could also construct counter-examples for this degree over every number field). 
For diagonal surfaces however, much like
the Grunwald-Wang theorem, the answer depends intimately on the arithmetic
of the field. Our main result states that for odd degree, the answer is completely controlled by cohomology,
whereas for even degree there are extra subtleties depending on whether
or not $(-1)$ is a $d$th power in $k$.

\begin{theorem} \label{thm:cohomology}
	Let $d \geq 3$ and let $k$ be a number field.
	If $d$ is odd, then every diagonal surface of degree $d$ over $k$ satisfies
	the Hasse principle for lines if and only if
	$$\HH^1(k(\mu_d)/k,\mu_d) = 0.$$
	If $d$ is even, then every diagonal surface of degree $d$ over $k$ satisfies
	the Hasse principle for lines if and only if
	$$\HH^1(k(\mu_d)/k,\mu_d) = 0 \quad \mbox{and} \quad (-1) \not \in k^{*d}.$$
\end{theorem}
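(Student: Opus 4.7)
The plan is to translate the Hasse principle for lines into a local--global question about membership in $k^{*d}$, and then to argue necessity by explicit counter-examples and sufficiency by a cohomological reduction to $\Sha(k,\mu_d)=0$, which under the stated hypotheses follows from the Grunwald-Wang theorem.

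From the explicit description in~\eqref{eqn:lines}, a line of family $L_\ell$ is defined over a field $F\supseteq k$ if and only if both elements of the corresponding pair
\[
\{-a_1/a_0,\ -a_3/a_2\},\quad \{-a_2/a_0,\ -a_3/a_1\},\quad \{-a_3/a_0,\ -a_2/a_1\}
\]
lie in $F^{*d}$ (because each coefficient appearing in \eqref{eqn:lines} is a $d$-th root of one of these six elements). Thus HP for lines on the diagonal surface with parameters $(a_0,a_1,a_2,a_3)$ becomes the implication: if at every place $v$ at least one of the three pairs consists of $d$-th powers in $k_v^*$, then at least one does so in $k^*$.

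For necessity I would produce counter-examples in two cases. If $H^1(k(\mu_d)/k,\mu_d)\neq 0$, pick a nontrivial class $\gamma\in k^*\setminus k^{*d}$ in this kernel, so $\gamma$ becomes a $d$-th power in $k(\mu_d)$ and, via the inflation-restriction sequence over each local field (where the analogous local $H^1$ vanishes at almost all places, with the finitely many remaining places absorbed into the construction), in every completion $k_v$. For $d$ odd the choice $(a_0,a_1,a_2,a_3)=(1,1,-1,\gamma)$ collapses all three pair-conditions to ``$\gamma\in k^{*d}$'' (using $-1\in k^{*d}$ for $d$ odd), giving a counter-example. If $d$ is even and $-1\in k^{*d}$, writing $-1=\delta^d$ with $\delta\in k$ makes the cocycle relations between the $d$-th roots defining the lines $k$-rational up to $\mu_d$; one exploits this extra symmetry to arrange four classes $[a_0],\dots,[a_3]\in k^*/k^{*d}$ that admit no global pairing but pair-equivalence in every $G_v=k_v^*/k_v^{*d}$.

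For sufficiency set $G=k^*/k^{*d}$, $b_i=[-a_i/a_0]\in G$ and $c=[-1]\in G[2]$. Each family $L_\ell$ is defined over a field iff two explicit linear identities in the $b_i,c$ hold in the corresponding version of $G$. The hypotheses imply $\Sha(k,\mu_d)=0$: for $d$ odd this is the content of $\Sha=H^1(k(\mu_d)/k,\mu_d)$, and for $d$ even the extra assumption $-1\notin k^{*d}$ is used to rule out the Grunwald-Wang special case, so again $\Sha=H^1=0$. A case analysis on how many of the $b_i$ vanish globally then completes the argument: if some $b_\ell=0$ globally, a short inspection of each local case shows that the ``partner'' expression of family $L_\ell$ is locally trivial at every $v$, hence in $\Sha=0$; if no $b_\ell$ vanishes globally, the local witness at each place produces an element forced into $\Sha=0$ but incompatible with the assumed non-vanishing, a contradiction. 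The main obstacle is the sufficiency direction for $d$ even: one must verify that $-1\notin k^{*d}$ is precisely the condition that excludes the Grunwald-Wang special case in our setting, which requires a careful matching between the classical arithmetic description of that special case and the combinatorial role of $-1$ in the cocycle relations defining the three families of lines.
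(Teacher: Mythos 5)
Your reduction of the problem to $d$-th power conditions on the three pairs is correct and matches the paper's use of the Hilbert scheme description \eqref{eqn:general}, but both directions of your argument contain genuine gaps. The most serious is the necessity direction for $d$ odd: the surface $(a_0,a_1,a_2,a_3)=(1,1,-1,\gamma)$ collapses all three line conditions to the single condition $\gamma\in F^{*d}$, so it is a counter-example precisely when $\gamma$ is a non-trivial element of $\Sha(k,\mu_d)$ --- and by the Grunwald--Wang theorem this group is \emph{trivial} for $d$ odd. (Your parenthetical hope that the bad places are ``finitely many'' is false: $\gamma\in\ker\left(k^*/k^{*d}\to k(\mu_d)^*/k(\mu_d)^{*d}\right)$ only guarantees $\gamma\in k_v^{*d}$ at the places split completely in $k(\mu_d)$, a set of density $1/[k(\mu_d):k]$.) The actual counter-examples must exploit all three families simultaneously, with \emph{different} families supplying the line at different places; this is what the paper's surfaces $x_0^d-\alpha^{eq}x_1^d-\beta^{ep^n}x_2^d+\alpha^{eq}\beta^{ep^n}x_3^d$ achieve, with $\beta$ chosen by a Chebotarev argument (Lemma \ref{lem:beta}) to repair the finitely many genuinely bad places. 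Relatedly, your identification of $\HH^1(k(\mu_d)/k,\mu_d)$ with $\Sha(k,\mu_d)$ is false: the former can be non-zero for odd $d$ (e.g.\ $d=21$, $k=\QQ(\mu_3)$, which is exactly the content of Theorem \ref{thm:odd_fail}), while the latter always vanishes for odd $d$.

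The sufficiency direction has the same blind spot. Since the witnessing family can vary with $v$, local solubility everywhere does not hand you a single class of $k^*/k^{*d}$ that is locally trivial everywhere, so ``an element forced into $\Sha=0$'' is not available; this is precisely why the theorem is not a corollary of Grunwald--Wang. The paper instead first uses $\HH^1(k(\mu_d)/k,\mu_d)=0$ together with \eqref{eqn:H^1} to descend a line from $k(\mu_d)$ to $k$, reducing to the case $\mu_d\subset k$, and then runs a genuinely group-theoretic argument (Proposition \ref{prop:mu_d}): by Lemma \ref{lem:Chebotarev} the splitting group $\Gamma$ is covered by the three stabilisers $\Gamma_i$, one shows $\Gamma_i\cap\Gamma_j=0$ and hence $\Gamma\cong\ZZ/2\ZZ\times\ZZ/2\ZZ$, and finally an explicit computation of the generic Galois group and its action on the lines forces $\mu_{2d}\subset k$, contradicting $(-1)\notin k^{*d}$. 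None of this is visible in your sketch, and without it the ``short inspection of each local case'' cannot be completed.
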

Note that if we write $d=2^ne$ where $n \geq 0$ and  $e$ is odd, then $(-1) \in k^{*d}$
if and only if $\mu_{2^{n+1}} \subset k$.
The cohomology group in Theorem \ref{thm:cohomology} naturally arises for us via inflation-restriction, namely
we have
\begin{equation} \label{eqn:H^1}
	\HH^1(K/k,\mu_d(K))=\ker\left(k^*/k^{*d} \to K^* / K^{*d}\right),
\end{equation}
for any finite Galois field extension $k \subset K$.
We calculate the cohomology group appearing in Theorem \ref{thm:cohomology}
in \S \ref{sec:cohomology}, from which we are able to obtain the following results.

\begin{theorem} \label{thm:odd_hold}
	Let $d \geq 3$ be odd and let $k$ be a number field.
	Suppose that one of the following conditions holds.
	\begin{enumerate}
		\item $\mu_d(k) = \{1\}$.
		\item $\mu_d \subset k$.
		\item $q \not \equiv 1 \bmod p$ for all primes $p,q \mid d$.
	\end{enumerate}
	Then diagonal surfaces of degree $d$ over $k$ satisfy the Hasse principle
	for lines.
\end{theorem}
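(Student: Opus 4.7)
The plan is to combine Theorem~\ref{thm:cohomology} with the explicit cohomology computation carried out in \S\ref{sec:cohomology}. By the odd case of Theorem~\ref{thm:cohomology}, under each of the three hypotheses it suffices to show
\[
\HH^1\bigl(k(\mu_d)/k,\mu_d\bigr) \;=\; 0.
\]
Case~(2) is immediate, since $\mu_d \subset k$ makes $\Gal(k(\mu_d)/k)$ trivial.

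For cases~(1) and (3), set $K = k(\mu_d)$ and view $G := \Gal(K/k)$ as a subgroup of $(\ZZ/d\ZZ)^*$ via its action on $\mu_d$. Since $d$ is odd, the prime-power factorisation $d = \prod_{p \mid d} p^{a_p}$ yields a $G$-equivariant splitting $\mu_d = \bigoplus_{p \mid d}\mu_{p^{a_p}}$, so that
\[
\HH^1(G,\mu_d) \;=\; \bigoplus_{p \mid d}\HH^1\bigl(G,\mu_{p^{a_p}}\bigr),
\]
reducing the problem to one prime $p$ at a time. For odd $p$ the group $(\ZZ/p^{a_p}\ZZ)^*$ is cyclic, so the action of $G$ on $\mu_{p^{a_p}}$ factors through a cyclic quotient $G_p$ with generator $\sigma_p$; inflation-restriction along the kernel $N_p = \ker(G \to G_p)$ (which acts trivially on $\mu_{p^{a_p}}$) further reduces the problem to verifying both
\[
\HH^1\bigl(G_p,\mu_{p^{a_p}}\bigr) = 0 \qquad \text{and} \qquad \Hom\bigl(N_p,\mu_{p^{a_p}}\bigr)^{G_p} = 0,
\]
the first of which is computed by the cyclic-group formula $\ker N_{G_p}/\mathrm{im}(\sigma_p - 1)$.

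For case~(1), the hypothesis $\mu_d(k) = \{1\}$ forces $\mu_{p^{a_p}}^{G_p} = 1$, so $\sigma_p - 1$ is injective and hence bijective on the finite cyclic group $\mu_{p^{a_p}}$; the image is everything and the quotient $\HH^1(G_p,\mu_{p^{a_p}})$ vanishes. Moreover, any $G_p$-equivariant homomorphism $N_p \to \mu_{p^{a_p}}$ must take values in $\mu_{p^{a_p}}^{G_p} = 1$, killing the second term as well. For case~(3), the condition $q \not\equiv 1 \pmod p$ for all primes $p,q \mid d$ implies that the prime-to-$p$ part of the cyclic group $G_p \subset (\ZZ/p^{a_p}\ZZ)^*$ has order coprime to $d$; a direct computation---splitting $G_p$ into its $p$-Sylow and prime-to-$p$ factors and tracking the norm element on $\mu_{p^{a_p}}$---then produces the required vanishing for both summands.

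The main obstacle is case~(3): the hypothesis is combinatorial rather than a clean coprimality statement on group orders, and in particular $p$ may divide $|G_p|$ whenever $a_p \ge 2$, so one cannot simply appeal to the standard vanishing of $\HH^n(G,M)$ for $\gcd(|G|,|M|) = 1$. The heart of the argument is therefore an explicit analysis of the $p$-adic valuation of the norm element $N_{G_p}$ and of the cokernel of $\sigma_p - 1$ inside $\mu_{p^{a_p}}$, exploiting the cyclic structure of $(\ZZ/p^{a_p}\ZZ)^*$; this is precisely what is supplied in Section~\ref{sec:cohomology}, after which the three cases above finish the proof.
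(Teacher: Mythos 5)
Your overall architecture is the right one, and in fact it is the paper's: reduce via Theorem~\ref{thm:cohomology} to showing $\HH^1(k(\mu_d)/k,\mu_d)=0$, split $\mu_d=\bigoplus_{p^{a_p}\mid\mid d}\mu_{p^{a_p}}$, and apply inflation--restriction along $N_p=\Gal(k(\mu_d)/k(\mu_{p^{a_p}}))$ to reduce to the two summands $\HH^1(G_p,\mu_{p^{a_p}})$ and $\Hom(N_p,\mu_{p^{a_p}})^{G_p}$ (this is Proposition~\ref{prop:cohomology} combined with Lemma~\ref{lem:GWW}). Cases~(1) and (2) are handled correctly.

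Case~(3) has a genuine gap: you have applied the hypothesis to the wrong group. The first summand $\HH^1(G_p,\mu_{p^{a_p}})=\HH^1(k(\mu_{p^{a_p}})/k,\mu_{p^{a_p}})$ vanishes for \emph{every} odd prime $p$ with no hypothesis on $d$ at all --- this is exactly Lemma~\ref{lem:GWW} (we are never in the special case since $d$ is odd), so the condition $q\not\equiv 1\bmod p$ is not needed there. Where the hypothesis must enter is the second summand. Since $k(\mu_d)/k$ is abelian, a $G_p$-equivariant homomorphism $N_p\to\mu_{p^{a_p}}$ takes values in $\mu_{p^{a_p}}^{G_p}=\mu_{p^{a_p}}(k)$, a cyclic $p$-group; hence $\Hom(N_p,\mu_{p^{a_p}})^{G_p}$ vanishes precisely when $\mu_p(k)=\{1\}$ or $p\nmid\#N_p$. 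What one therefore needs is $p\nmid[k(\mu_d):k(\mu_{p^{a_p}})]$, and this follows because that index divides $\varphi(d/p^{a_p})=\prod_{q^m\mid\mid d,\,q\neq p}q^{m-1}(q-1)$, which is prime to $p$ exactly because $q\not\equiv 1\bmod p$ for every prime $q\mid d$ with $q\neq p$. Your stated consequence of the hypothesis --- that the prime-to-$p$ part of $|G_p|$ is coprime to $d$ --- is true (it uses the hypothesis in the transposed form $p\not\equiv 1\bmod q$), but it says nothing about $N_p$ and so cannot kill the term $\Hom(N_p,\mu_{p^{a_p}})^{G_p}$, which is the only place the hypothesis of (3) is actually used. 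Once this is corrected, your argument coincides with the paper's, which packages both summands into the criterion of Lemma~\ref{lem:non-trivial}$(a)$: the group is non-trivial if and only if $\mu_p\subset k$ and $p\mid[k(\mu_d):k(\mu_{p^{a_p}})]$.
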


Condition $(1)$ holds for example if $k=\QQ$, condition $(2)$ holds when $k~=~\QQ(\mu_d)$,
and condition $(3)$ holds if $d$ is an odd prime power or $d=15$.
In the original Grunwald-Wang theorem,
one quickly reduces to the case where the exponent is a prime power.
Here however this cannot be done, as the prime divisors of $d$ can interact
in a non-trivial way. In particular, contrary to the case of Grunwald-Wang, the Hasse principle for lines
can fail for diagonal surfaces of odd degree. 

\begin{theorem} \label{thm:odd_fail}
	Let $d \geq 3$ be odd.
	Suppose that there exist primes $p,q \mid d$,
	such that $q \equiv 1 \bmod p$. Then there exist a number field $k$ 
	and a diagonal surface of degree $d$ over $k$ which fails
	the Hasse principle for lines.
\end{theorem}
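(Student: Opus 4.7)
The plan is to apply Theorem~\ref{thm:cohomology}. Since $d$ is odd, that theorem asserts that the Hasse principle for lines fails for some diagonal surface of degree $d$ over a given number field $k$ if and only if $\HH^1(k(\mu_d)/k,\mu_d)\neq 0$. It therefore suffices to exhibit a single number field $k$ with non-trivial $\HH^1(k(\mu_d)/k,\mu_d)$; the ``only if'' direction of Theorem~\ref{thm:cohomology} then supplies the required diagonal surface.

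To construct such a $k$, I would factor $d = m q^e$ with $e = v_q(d)\geq 1$ and $\gcd(m,q)=1$, noting $p\mid m$ since $p\neq q$, and take $k=\QQ(\mu_m)$. The Chinese Remainder decomposition $(\ZZ/d\ZZ)^*\cong(\ZZ/m\ZZ)^*\times(\ZZ/q^e\ZZ)^*$ identifies $G:=\Gal(k(\mu_d)/k)$ with $(\ZZ/q^e\ZZ)^*$, which is cyclic of order $q^{e-1}(q-1)$ (as $q$ is odd); the hypothesis $q\equiv 1\bmod p$ forces $p\mid |G|$. The same CRT gives a $G$-equivariant splitting $\mu_d = \mu_m\oplus\mu_{q^e}$, with $G$ acting trivially on $\mu_m$ (because $\mu_m\subset k$) and through the natural character on $\mu_{q^e}$. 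Consequently
\[
\HH^1(G,\mu_d)\supseteq \HH^1(G,\mu_m) = \Hom(G,\mu_m),
\]
and this last group is non-zero: since $G$ is cyclic with $p\mid |G|$ it surjects onto $\ZZ/p\ZZ$, and since $p\mid m$ we have $\mu_p\subset \mu_m$, so composition yields a non-trivial homomorphism. This produces the desired non-vanishing.

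The main obstacle I foresee is not in this construction or the cohomology computation (which is routine once the decomposition $d = mq^e$ is in hand), but rather in the invocation of the non-trivial ``only if'' direction of Theorem~\ref{thm:cohomology}: one must pass from the abstract non-vanishing of $\HH^1(k(\mu_d)/k,\mu_d)$ to an explicit choice of coefficients $a_0,\dots,a_3\in k^*$ for which \eqref{eqn:diagonal} violates the Hasse principle for lines. This passage is to be supplied by the internal mechanism of Theorem~\ref{thm:cohomology}, which presumably turns a cocycle representing a non-trivial class in the cohomology into the ratios $a_i/a_j$ defining the surface via the formulas in \eqref{eqn:lines}; once available, it yields the counter-example immediately.
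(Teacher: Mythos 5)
Your proof is correct and follows essentially the same route as the paper: reduce to exhibiting a cyclotomic field $k$ with $\HH^1(k(\mu_d)/k,\mu_d)\neq 0$ and then invoke the ``only if'' direction of Theorem~\ref{thm:cohomology} (whose proof, via Lemma~\ref{lem:a}, supplies the explicit surface). The only cosmetic difference is that the paper takes $k=\QQ(\mu_{p^n})$ with $p^n\,\|\,d$ and cites Lemma~\ref{lem:non-trivial}$(a)$, whereas you take $k=\QQ(\mu_{d/q^e})$ and compute $\Hom(G,\mu_m)\neq 0$ directly; both choices work for the same reason.
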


Theorem \ref{thm:odd_fail} applies for example when $d=21$.
Counting the number of $d$ to which Theorem \ref{thm:odd_fail} applies
is an interesting problem in its own right in analytic number theory.
Namely, let
$$D(x) = \# \{ d \leq x : q \not \equiv 1 \bmod p \textrm{ for all primes } p,q \mid d\}.$$
Let $D_{\mathrm{sf}}(x)$ denote the corresponding counting function where $d$ is also assumed
to be square-free. Then Erd\H{o}s \cite{Erd48} (see also \cite[Thm.~11.23]{MV07}) has shown that 
$$D_{\mathrm{sf}}(x) \sim e^{-\gamma} \frac{ x}{\log \log \log x}, \quad \textrm{ as } x \to \infty.$$
A minor modification of his proof yields the upper bound
$$D(x) \ll \frac{ x}{\log \log \log x}.$$
In particular, Theorem \ref{thm:odd_fail} applies to $100\%$ of all odd integers $d$.

Using Theorem \ref{thm:cohomology} it is easy to see that for each even $d\geq 4$, 
there exists a number field $k$ for which every diagonal surface of degree $d$ over $k$ satisfies
the Hasse principle for lines (e.g., take $k = \QQ(\mu_d)$).
However in contrast to part $(3)$ of Theorem~\ref{thm:odd_hold},
the Hasse principle for lines can fail in every even degree.

\begin{theorem} \label{thm:even_fail}
	Let $d\geq 4$ be even and let $k$ be a totally real number field.
	Then there exists a diagonal surface of degree $d$ over $k$ which
	fails the Hasse principle for lines.
\end{theorem}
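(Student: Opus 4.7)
The plan is to derive the theorem from Theorem \ref{thm:cohomology}: for $k$ totally real and $d \geq 4$ even, I will show that at least one of the two equivalent conditions fails, forcing the existence of a diagonal surface for which the Hasse principle for lines fails. The condition $(-1) \notin k^{*d}$ is immediate from the remark following Theorem \ref{thm:cohomology}: writing $d = 2^n e$ with $n \geq 1$ and $e$ odd, it amounts to $\mu_{2^{n+1}} \not\subset k$, which holds as $k$ is totally real while $2^{n+1} \geq 4$. The task thus reduces to proving $\HH^1(k(\mu_d)/k, \mu_d) \neq 0$.

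To prove this, set $K = k(\mu_d)$ and $G = \Gal(K/k)$, which is abelian. Since $\mu_d \not\subset \RR$ for $d \geq 3$, complex conjugation provides an order-$2$ element $\tau \in G$ acting on $\mu_d$ by $\zeta \mapsto \zeta^{-1}$; let $H = \langle \tau \rangle$, which is normal in $G$. A direct cohomology computation for a cyclic group of order $2$ with inversion action yields $\HH^1(H, \mu_d) = \mu_d/\mu_d^2 \cong \ZZ/2\ZZ$ since $d$ is even. Moreover, each $g \in G \subseteq (\ZZ/d\ZZ)^*$ acts on $\mu_d$ by a power coprime to $d$, hence odd, so $G/H$ acts trivially on the $\ZZ/2\ZZ$-groups $\HH^1(H, \mu_d)$ and $\mu_d^H = \mu_2$.

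To transfer the non-vanishing from $H$ to $G$, I would split by parity of $[G:H]$. If $[G:H]$ is odd, the identity $\mathrm{res} \circ \mathrm{cor} = [G:H] \cdot \mathrm{id}$ on $\HH^1(H, \mu_d)$ (valid because the $G/H$-action is trivial) is the identity on $\ZZ/2\ZZ$, so corestriction embeds $\HH^1(H, \mu_d)$ into $\HH^1(G, \mu_d)$. If $[G:H]$ is even, the inflation-restriction sequence
\[
    0 \to \HH^1(G/H, \mu_2) \to \HH^1(G, \mu_d) \to \HH^1(H, \mu_d)^{G/H}
\]
embeds $\HH^1(G/H, \mu_2) = \Hom(G/H, \ZZ/2\ZZ)$, which is non-zero as $G/H$ is a finite abelian group of even order. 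In either case $\HH^1(K/k, \mu_d) \neq 0$, and Theorem \ref{thm:cohomology} supplies the desired counter-example. The main subtlety I anticipate is that neither corestriction nor inflation-restriction alone covers every possible $[G:H]$, so both standard comparison maps must be invoked; identifying the relevant $G/H$-actions as trivial (using that units mod $d$ are odd for $d$ even) is what makes both arguments go through.
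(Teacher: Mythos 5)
Your argument is correct, but it reaches the key non-vanishing $\HH^1(k(\mu_d)/k,\mu_d)\neq 0$ by a genuinely different route than the paper. The paper writes $d=2^ne$ and splits on $n$: for $n=1$ it notes that complex conjugation forces $2\mid[k(\mu_d):k]$ and invokes Lemma \ref{lem:non-trivial}$(a)$, while for $n\geq 2$ it observes that a totally real $k$ is automatically in the special case of Definition \ref{def:special}, so Lemma \ref{lem:non-trivial}$(b)$ --- ultimately the Grunwald--Wang computation of Lemma \ref{lem:GWW} --- gives $\HH^1(k(\mu_d)/k,\mu_{2^n})\neq 0$, and Proposition \ref{prop:cohomology}$(a)$ transfers this to $\mu_d$. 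You instead work directly with the order-two subgroup $H$ generated by complex conjugation, compute $\HH^1(H,\mu_d)\cong\ZZ/2\ZZ$ by hand (using that $\tau$ inverts $\mu_d$ and $d$ is even), verify that $G/H$ acts trivially because units modulo an even $d$ are odd, and then transfer the class to $G$ by corestriction when $[G:H]$ is odd and by inflation from $\HH^1(G/H,\mu_2)=\Hom(G/H,\ZZ/2\ZZ)$ when $[G:H]$ is even; both transfer steps are applied correctly. This treatment is uniform in the $2$-adic valuation of $d$ and avoids both the prime-power decomposition and the black-boxed Lemma \ref{lem:GWW}, at the cost of a longer hands-on cohomological argument; the paper's version is shorter only because the machinery of \S\ref{sec:cohomology} is already in place. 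One small logical remark: your preliminary check that $(-1)\notin k^{*d}$ is harmless but not strictly needed, since the negation of the even-degree criterion in Theorem \ref{thm:cohomology} only requires one of the two conditions to fail; it does, however, correctly explain why you are forced to establish the cohomological non-vanishing rather than exploit the $(-1)\in k^{*d}$ alternative.
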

In particular, Theorem \ref{thm:even_fail} applies when $k=\QQ$.


\subsection{Examples}
\subsubsection{Even degree}
The counter-examples which we construct in this paper are reasonably explicit.
For example, let $p \equiv 1 \bmod 8$ be a prime. We claim that the surface
\begin{equation} \label{eqn:surface}
	S: \quad x_0^4 + 2^2x_1^4 - p^2 x_2^4 - (2p)^2x_3^4 = 0
\end{equation}
over $\QQ$ fails the Hasse principle for lines.
Let us give an elementary argument why this is the case. Given a field extension $\QQ \subset k$, the surface $S$ contains a line
over $k$ if and only if
\begin{equation} \label{eqn:explicit}
	{-2}^2 \in k^{*4} \quad \mbox{or} \quad p^2 \in k^{*4} \quad \mbox{or} 
	\quad (2p)^2 \in k^{*4}.
\end{equation}
This is most easily seen using the explicit description of the Hilbert scheme
of lines given in \S \ref{sec:Hilbert}. In particular, there is no line
defined over $\QQ$. Clearly there is a line defined over $\RR$, 
and there is a line over $\QQ_2$ since $p \in \QQ_2^{*2}$.
So let $\ell$ be an odd  prime. If $\ell \equiv 1 \bmod 4$, then 
$(-1) \in \QQ_{\ell}^{*2}$ and
hence $-2^2=(1+i)^4$ is a fourth power in $\QQ_\ell$, where $i^2 = -1$. 
If $\ell \equiv 3 \bmod 4$, then every square in $\FF_{\!\ell}^*$ is also a fourth power.
Whence it follows from Hensel's lemma and \eqref{eqn:explicit}
that there is a line defined over $\QQ_\ell$. Thus $S$ fails
the Hasse principle for lines, as claimed.

Here the appearance of $2^2$ in the coefficients of \eqref{eqn:surface}
is very important for the construction. It appears naturally
through the consideration of the cohomology group 
$\HH^1(\QQ(i)/\QQ, \mu_4)$, as one would expect from Theorem~\ref{thm:cohomology}.
This cohomology group is isomorphic
to $\ZZ/2\ZZ$ (see Lemma \ref{lem:GWW}). A choice of representative for the non-trivial
element is given by $-2^2$, as $-2^2=(1+i)^4$ is not a fourth power in $\QQ$ but is a fourth power in $\QQ(i)$
(see \eqref{eqn:H^1}). This fact  naturally arose in the above argument,
as it implies that $-2^2 \in \QQ_\ell^{*4}$ for those primes $\ell$ which split in
$\QQ(i)$ (i.e. those primes $\ell \equiv 1 \bmod 4$).
This construction of counter-examples is generalised in Lemma~\ref{lem:b} below.
If $(-1) \in k^{*d}$ then there are counter-examples of a different kind,
which are moreover easier to construct; see Lemma \ref{lem:-1} below.

\subsubsection{Odd degree}
The counter-examples in odd degree are of a similar flavour, though here it is slightly
trickier to find conditions which guarantee that the group $\HH^1(k(\mu_d)/k, \mu_d)$
is non-trivial
(this group was non-trivial above for special reasons, namely as we were in the \emph{special case}
of Definition~\ref{def:special}).

As an example, let $p$ and $q$ be two odd primes with $q \equiv 1 \bmod p$. Put $d=pq$
and let $k=\QQ(\mu_p)$. Then we claim that $\HH^1(k(\mu_d)/k, \mu_d) \neq 0$. To see
this, first note that $\Gal(k(\mu_d)/k) \cong \ZZ/(q-1)\ZZ$. As $q \equiv 1 \bmod p$,
there exists an intermediate field extension $k \subset L \subset k(\mu_d)$
with $\Gal(L/k) \cong \ZZ/p\ZZ$. By Kummer theory, there exists some $\alpha \in k^*$
such that $L=k(\sqrt[p]{\alpha})$. Hence $\alpha$ is not a $p$th power
in $k$, but is a $p$th power in $L$. A simple argument shows that $\alpha^q$ is not
a $d$th power in $k$, thus by \eqref{eqn:H^1} we see that $\alpha^q$ 
gives rise to a non-trivial element of $\HH^1(k(\mu_d)/k, \mu_d)$, as required.
As for the counter-examples, let $\beta \in k^*$ and consider the surface
$$
	S: \quad x_0^{pq} - \alpha^{q} x_1^{pq} - \beta^p x_2^{pq} + \alpha^q \beta^p x_3^{pq} = 0.
$$
A similar argument to the case of even degree shows that $S$ contains a line over all
but finitely many completions of $k$. One needs to choose $\beta$ carefully
to ensure that there are lines \emph{everywhere} locally, whilst there still being no line
globally.

As an explicit example, let $p = 3$ and $q = 7$, so that $L =\QQ(\zeta_3, \zeta_7 + \zeta_7^{-1})$
where $\zeta_3$ and $\zeta_7$ are choices of primitive $3$rd and $7$th roots of
unity, respectively. Take
$$\rho = (\zeta_{7} + \zeta_{7}^{-1}) + \zeta_{3}(\zeta_{7}^2 + \zeta_{7}^{-2})
+ \zeta_{3}^{2}(\zeta_{7}^3 + \zeta_{7}^{-3}) \in L.$$
A simple calculation shows that if $\sigma \in \Gal(L/k)$ is a non-trivial element,
then $\sigma(\rho)/\rho$ is a non-trivial third root of unity.
In particular $\rho \not \in k$. Moreover, this shows that $\alpha = \rho^3 = 14 + 21\zeta_3  \in k$ is a suitable
choice for~$\alpha$, as above. In order to obtain the required counter-example, one may take $\beta$
to be a rational prime which is both $1 \bmod 3$ and $1 \bmod 7^2$ (e.g. take $\beta = 883$).
See the proof of Lemma \ref{lem:a} for details of why this choice works.

\subsection*{Notation}
Let $d \in \NN$ and let $k$ be a field whose characteristic is coprime to $d$. We denote by $\mu_d$ the group scheme
of $d$th roots of unity over $k$. By abuse of notation, we shall say that $\mu_d \subset k$
if $k$ contains a primitive $d$th root of unity, and denote by $k(\mu_d)$ the field
given by adjoining all $d$th roots of unity. We have $[k(\mu_d) : k] \mid \varphi(d)$, where $\varphi$
is Euler's phi function.

If $a \in k^*$, then the notation $k( a^{1/d} )$ is meaningless
in general; we shall only use it if $\mu_d \subset k$, in which case $k( a^{1/d})$ is defined
to be the splitting field of the polynomial $x^d - a$.

Let $k \subset K$ be a Galois field extension and $G$ a finite \'{e}tale commutative group scheme over $k$.
Then for $i \geq 0$, we use the notation
$\HH^i(K / k , G(K))$
for the Galois cohomology group
$\HH^i(\Gal(K/k), G(K) ).$
If $G$ is split over $K$, then we instead write
$\HH^i(K / k , G).$

\subsection*{Acknowledgements}
The authors would like to thank Matthias Sch\"{u}tt and Efthymios Sofos for useful comments, 
and the anonymous referee for their careful reading of our paper.

\section{Generalities}

\subsection{Cohomological calculations} \label{sec:cohomology}
We begin by calculating the cohomology group which arises in Theorem \ref{thm:cohomology}.
We will assume that the reader is familiar with the basic tools of Galois cohomology, as
can be found in \cite{NSW00} or \cite{Ser97}.

\begin{definition} \label{def:special}
	Let $p^n$ be a prime power and let $k$ be a field with $\cha(k)=0$.
	Then we shall say that $(k,p^n)$ is the \emph{special case} if 
	$$p = 2,\,\, n \geq 2 \quad \mbox{ and} \quad k \cap \QQ(\mu_{2^n}) \mbox{ is totally real}.$$
\end{definition}

This arises in the following lemma, which is usually 
shown during the course of the proof of the Grunwald-Wang theorem.

\begin{lemma} \label{lem:GWW}
	Let $p^n$ be a power prime and let $k$ be a field with $\cha(k) = 0$.
	If $(k,p^n)$ is not the special case, then
	$$
		\HH^i(k(\mu_{p^n})/k,\mu_{p^n})=0 \quad \mbox{for all } i \geq 1.
	$$
	Otherwise
	$$
		\HH^i(k(\mu_{2^n})/k,\mu_{2^n}) \cong \ZZ/2\ZZ \quad \mbox{for all } i \geq 1,
	$$
\end{lemma}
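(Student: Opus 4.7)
The plan is to reduce the computation to one about subgroups of $(\ZZ/p^n\ZZ)^*$. The restriction map
\[
\Gal(k(\mu_{p^n})/k) \xrightarrow{\sim} \Gal(\QQ(\mu_{p^n})/(k\cap\QQ(\mu_{p^n})))
\]
is an isomorphism of Galois groups compatible with the action on $\mu_{p^n}$, so I may assume $k\subset\QQ(\mu_{p^n})$ and view $G := \Gal(\QQ(\mu_{p^n})/k)$ as a subgroup of $(\ZZ/p^n\ZZ)^*$ via the cyclotomic character. Under this identification the special case is precisely the condition $-1\in G$ (equivalently, complex conjugation fixes $k$).

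The first step is to dispose of the non-special case by showing $G$ is cyclic and then invoking cyclic group cohomology. For $p$ odd, $(\ZZ/p^n\ZZ)^*$ is itself cyclic, so $G$ is cyclic. For $p=2$ with $-1\notin G$, the decomposition $(\ZZ/2^n\ZZ)^* \cong \langle -1\rangle\times\langle 5\rangle$ together with the fact that a finite abelian $2$-group is cyclic iff it has at most one element of order $2$ forces $G$ to be cyclic. Once $G$ is cyclic, Tate cohomology is $2$-periodic and the Herbrand quotient of the finite module $\mu_{p^n}$ equals $1$, so it suffices to check $\hat\HH^0(G,\mu_{p^n}) = 0$. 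Let $m$ be the largest integer with $\mu_{p^m}\subset k$, so that $\mu_{p^n}^G = \mu_{p^m}$, and let $\sigma$ generate $G$ acting as $\zeta\mapsto\zeta^a$; then the norm acts as $\zeta\mapsto\zeta^s$ with $s=(a^{|G|}-1)/(a-1)$. A direct valuation computation (lifting-the-exponent for $p$ odd; a short case analysis inside $\langle -1\rangle\times\langle 5\rangle$ for the non-special $p=2$ case) yields $v_p(s) = n-m$, so the norm already surjects onto $\mu_{p^m} = \mu_{p^n}^G$, giving $\hat\HH^0 = 0$.

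For the special case I would apply the Hochschild--Serre spectral sequence to the normal subgroup $H := \langle -1\rangle \lhd G$. Since complex conjugation acts on $\mu_{2^n}$ by inversion, a direct computation in the cyclic group $H$ of order $2$ gives $\mu_{2^n}^H = \mu_2$, $N_H\mu_{2^n} = \{1\}$, and $(\tau-1)\mu_{2^n} = \mu_{2^{n-1}}$, hence $\HH^q(H,\mu_{2^n})\cong\ZZ/2\ZZ$ for every $q\geq 1$ (and the same for $\hat\HH^q$ by $2$-periodicity). The quotient $G/H$ embeds into $\langle 5\rangle\cong\ZZ/2^{n-2}\ZZ$ and so is cyclic, and it acts trivially on $\mu_2\cong\ZZ/2\ZZ$. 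Consequently every entry of the $E_2$ page $\HH^p(G/H,\HH^q(H,\mu_{2^n}))$ is a copy of $\ZZ/2\ZZ$, and the claim $\HH^i(G,\mu_{2^n})\cong\ZZ/2\ZZ$ reduces to showing that exactly one copy of $\ZZ/2\ZZ$ survives on each antidiagonal $p+q=i$.

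The hard part will be this spectral-sequence computation when $G\supsetneq\langle -1\rangle$, because the differentials are $\ZZ/2\ZZ$-valued and need not vanish a priori. My approach would be to compute $\HH^1(G,\mu_{2^n})$ directly via the Hilbert-$90$ identification \eqref{eqn:H^1} as $(k^*\cap K^{*2^n})/k^{*2^n}$, producing the unique non-trivial class explicitly (for example, over $k=\QQ$ it is represented by $2^{2^{n-1}} = (\zeta_8+\zeta_8^{-1})^{2^n}$), and then to propagate the answer to all $i\geq 1$ either by $2$-periodicity of Tate cohomology for the cyclic subgroup $H$ combined with a restriction-corestriction argument, or by dimension-shifting using the short exact sequence $1\to\mu_{2^n}\to K^*\to K^{*2^n}\to 1$.
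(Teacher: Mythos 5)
Your reduction to a subgroup $G\subseteq(\ZZ/p^n\ZZ)^{*}$ and your treatment of the non-special case are sound (the paper itself simply cites \cite[Prop.~9.1.4]{NSW00}, so a self-contained argument is a genuinely different route): $G$ is cyclic there, the Herbrand quotient of a finite module is $1$, and the identity $v_p(s)=n-m$ does check out in each of the three subcases ($p$ odd; $a\in\langle 5\rangle$; $a=-5^{j}$ with $5^{j}\neq 1$). The gap is in the special case, and neither of your proposed repairs works as stated. Restriction--corestriction between $G$ and $H=\langle -1\rangle$ gives $\mathrm{cor}\circ\mathrm{res}=[G:H]$, which is even whenever $G\supsetneq H$ and hence the zero map on these $2$-torsion groups, so it carries no information; nor does periodicity of $\hat\HH^{*}(H,-)$ transfer to $G$. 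Dimension shifting along $1\to\mu_{2^n}\to K^{*}\to K^{*2^n}\to 1$ founders because $\HH^{i}(G,K^{*})$ does not vanish for $i\geq 2$ (it is $\Br(K/k)$ for $i=2$) and $\HH^{i}(G,K^{*2^n})$ is no more accessible than what you started with. Finally, in the spectral sequence with $H$ normal, every $E_2$-entry really is $\ZZ/2\ZZ$ and the differentials are exactly as hard as the original problem. (A small side error: for $n=2$ over $\QQ$ the non-trivial class is represented by $-4=(1+i)^4$, not by $4$, which is not a fourth power in $\QQ(i)$; your formula $2^{2^{n-1}}=(\sqrt{2})^{2^n}$ is only valid for $n\geq 3$.)

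The fix is to run the Hochschild--Serre extension the other way around. If $-1\in G\subseteq\langle -1\rangle\times\langle 5\rangle$, then $G=\langle -1\rangle\times C$ with $C=G\cap\langle 5\rangle$ (indeed, if $-5^{j}\in G$ then $5^{j}=(-1)\cdot(-5^{j})\in G$), and $C$ is a cyclic subgroup not containing $-1$. Taking $C$ as the normal subgroup, your already-established non-special computation gives $\HH^{q}(C,\mu_{2^n})=0$ for all $q\geq 1$, so the spectral sequence degenerates to
$$\HH^{i}(G,\mu_{2^n})\cong\HH^{i}\bigl(G/C,\mu_{2^n}^{C}\bigr)=\HH^{i}\bigl(\langle -1\rangle,\mu_{2^m}\bigr),$$
where $\mu_{2^m}=\mu_{2^n}^{C}$ satisfies $m\geq 1$. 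Since $-1$ acts by inversion, your own order-$2$ computation ($\mu_{2^m}^{H}=\mu_2$, $N_H\mu_{2^m}=\{1\}$, $(\tau-1)\mu_{2^m}=\mu_{2^{m-1}}$) gives $\ZZ/2\ZZ$ in every degree $i\geq 1$, completing the special case with no differentials to control.
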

\begin{proof}
	 See \cite[Prop.~9.1.4]{NSW00}.
\end{proof}

\begin{proposition} \label{prop:cohomology}
	Let $d \in \NN$ and let $k$ be a field with $\cha(k) = 0$.
	Let $k \subset K$ be a finite abelian extension with $\mu_d \subset K$.
	Then
	\begin{enumerate}
	\item[$(a)$]
	$\HH^1(K/k, \mu_d)  \cong \prod_{p^n \mid\mid d} \HH^1(K/k, \mu_{p^n}).$
	\end{enumerate}
	Let $p^n \mid \mid d$. If $(k,p^n)$ is not the special case, then
	\begin{enumerate}
	\item[$(b)$]
	$\HH^1(K/k, \mu_{p^n}) \cong \Hom(\Gal(K/k(\mu_{p^n})), \mu_{p^n}(k)).$
	\end{enumerate}
\end{proposition}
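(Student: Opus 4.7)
Part $(a)$ is essentially formal: by the Chinese remainder theorem for roots of unity, the group scheme $\mu_d$ decomposes as a product $\mu_d \cong \prod_{p^n \| d} \mu_{p^n}$, and this decomposition is manifestly compatible with the action of any Galois group (since all the factors are characteristic subgroups). Since cohomology commutes with finite direct products of modules, this yields the asserted product decomposition of $\HH^1(K/k,\mu_d)$.

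For part $(b)$, my plan is to use the inflation-restriction five-term exact sequence for the tower
$$k \subset k(\mu_{p^n}) \subset K,$$
with coefficients in $\mu_{p^n}$. Writing $G = \Gal(k(\mu_{p^n})/k)$ and $H = \Gal(K/k(\mu_{p^n}))$, this reads
$$0 \to \HH^1(G,\mu_{p^n}) \to \HH^1(K/k,\mu_{p^n}) \to \HH^1(H,\mu_{p^n})^G \to \HH^2(G,\mu_{p^n}).$$
Since we are not in the special case for $(k,p^n)$, Lemma \ref{lem:GWW} forces both the outer terms $\HH^1(G,\mu_{p^n})$ and $\HH^2(G,\mu_{p^n})$ to vanish, so restriction gives an isomorphism
$$\HH^1(K/k,\mu_{p^n}) \;\xrightarrow{\sim}\; \HH^1(H,\mu_{p^n})^G.$$
Because $\mu_{p^n} \subset K$, the action of $H$ on $\mu_{p^n}$ is trivial and $\HH^1(H,\mu_{p^n}) = \Hom(H,\mu_{p^n})$.

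The only remaining step is to identify the $G$-invariants $\Hom(H,\mu_{p^n})^G$ with $\Hom(H,\mu_{p^n}(k))$. Here I would use the hypothesis that $K/k$ is abelian: this implies that $H$ is central in $\Gal(K/k)$, so the conjugation action of $G$ on $H$ is trivial. Consequently the $G$-action on $\Hom(H,\mu_{p^n})$ is induced purely from the $G$-action on the coefficients $\mu_{p^n}$. A homomorphism $f : H \to \mu_{p^n}$ is thus $G$-fixed if and only if its image lies in $\mu_{p^n}^G = \mu_{p^n}(k)$, giving the claimed identification.

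The main obstacle I anticipate is bookkeeping at the last step — in particular, one must invoke the abelian hypothesis on $K/k$ to ensure the $G$-action on $H$ by conjugation is trivial (without this the natural map $\Hom(H,\mu_{p^n}(k)) \to \Hom(H,\mu_{p^n})^G$ need not be an isomorphism). Everything else is a mechanical application of inflation-restriction combined with Lemma \ref{lem:GWW}.
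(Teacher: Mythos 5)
Your proof is correct and follows essentially the same route as the paper: part $(a)$ via the canonical splitting $\mu_d \cong \prod_{p^n \|d}\mu_{p^n}$, and part $(b)$ via inflation-restriction for $k \subset k(\mu_{p^n}) \subset K$, Lemma \ref{lem:GWW} to kill the outer terms, and the abelian hypothesis to trivialise the conjugation action so that the $G$-invariants are exactly the homomorphisms into $\mu_{p^n}(k)$. No gaps; your final identification step is in fact spelled out slightly more carefully than in the paper's own proof.
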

\begin{proof}
	Part $(a)$ follows simply from the fact that if $p^n \mid \mid d$,
	then $\mu_d = \mu_{p^n} \oplus \mu_{d/p^n}$. For part $(b)$, 
	by inflation-restriction we have the exact sequence
	\begin{align*}
		0 &\longrightarrow \HH^1(k(\mu_{p^n})/k,\mu_{p^n}) \longrightarrow \HH^1(K/k,\mu_{p^n}) \\
		&\longrightarrow \HH^1(K/k(\mu_{p^n}),\mu_{p^n})^{\Gal(k(\mu_{p^n})/k)} \longrightarrow \HH^2(k(\mu_{p^n})/k,\mu_{p^n}).
	\end{align*}
	Thus by Lemma \ref{lem:GWW} we find that
	\begin{equation} \label{eqn:bad}
		\HH^1(K/k,\mu_{p^n}) \cong \HH^1(K/k(\mu_{p^n}),\mu_{p^n})^{\Gal(k(\mu_{p^n})/k)}.
	\end{equation}
	As $\Gal(K/k(\mu_{p^n}))$ acts trivially on	$\mu_{p^n}$, we see that \eqref{eqn:bad}
	is isomorphic to the group $\Hom(\Gal(K/k(\mu_{p^n})), \mu_{p^n})^{\Gal(k(\mu_{p^n})/k)}$.
	Part $(b)$ now follows from the fact that, as the extension $K/k$ is abelian, 
	the natural conjugation action of the group $\Gal(k(\mu_{p^n})/k)$ on $\Gal(K/k(\mu_{p^n}))$ is trivial.
\end{proof}

\begin{lemma} \label{lem:non-trivial}
	Let $p^n$ be a prime power and let $k$ be a field with $\cha(k) = 0$.
	Let $K/k$ a finite abelian extension 
	with $\mu_{p^n} \subset K$.
	\begin{enumerate}
		\item[$(a)$] If $(k,p^n)$ is not the special case, then 
			$\HH^1(K/k, \mu_{p^n})$ is non-trivial if and only if
			$$ \mu_p \subset k \quad \mbox{and} \quad p \mid [K:k(\mu_{p^n})].$$
		\item[$(b)$] If $p=2$ and $(k,2^n)$ is the special case,
			then $\HH^1(K/k, \mu_{2^n})$ is non-trivial.
	\end{enumerate}
\end{lemma}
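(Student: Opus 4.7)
The plan is to derive both parts directly from the cohomological machinery already established, reducing everything to finite abelian group theory.

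For part $(a)$, since $(k,p^n)$ is not the special case, Proposition \ref{prop:cohomology}$(b)$ gives
\[
\HH^1(K/k,\mu_{p^n}) \cong \Hom\!\bigl(\Gal(K/k(\mu_{p^n})),\,\mu_{p^n}(k)\bigr),
\]
so the question reduces to deciding when this Hom group between two finite abelian groups is non-trivial. I would first observe that the target $\mu_{p^n}(k)$ is a cyclic $p$-group (as a finite subgroup of $\mu_{p^n}$), and that it contains an element of order $p$ if and only if $\mu_p \subset k$; otherwise it is trivial. Thus one direction is immediate: if $\mu_p \not\subset k$, the Hom group vanishes. Similarly, if $p \nmid [K:k(\mu_{p^n})]$, then the source group $\Gal(K/k(\mu_{p^n}))$ has order coprime to $p$, so any homomorphism into a $p$-group is zero.

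For the converse in $(a)$, assume both $\mu_p \subset k$ and $p \mid [K:k(\mu_{p^n})]$. Then the source has a cyclic quotient of order $p$ (since any finite abelian group whose order is divisible by $p$ surjects onto $\mathbb{Z}/p\mathbb{Z}$), and the target contains a subgroup of order $p$, so a non-trivial homomorphism exists. Combining the two directions gives the stated equivalence.

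For part $(b)$, I would recycle the inflation-restriction sequence used in the proof of Proposition \ref{prop:cohomology}, which yields an injection
\[
\HH^1(k(\mu_{2^n})/k,\mu_{2^n}) \hookrightarrow \HH^1(K/k,\mu_{2^n}).
\]
By Lemma \ref{lem:GWW}, in the special case the left-hand group is isomorphic to $\mathbb{Z}/2\mathbb{Z}$, so the right-hand side is automatically non-trivial.

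The only potentially subtle point is the converse direction of $(a)$: one must produce a non-trivial homomorphism from the abelian group $\Gal(K/k(\mu_{p^n}))$ to the cyclic $p$-group $\mu_{p^n}(k)$. This is routine once one invokes the structure theorem (or just the fact that any finite abelian group of order divisible by $p$ has $\mathbb{Z}/p\mathbb{Z}$ as a quotient), so I do not expect a real obstacle anywhere; the whole argument is essentially a bookkeeping exercise on top of Proposition \ref{prop:cohomology} and Lemma \ref{lem:GWW}.
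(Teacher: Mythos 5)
Your proof is correct and takes essentially the same route as the paper: part $(a)$ is deduced from Proposition \ref{prop:cohomology}$(b)$ (your analysis of when $\Hom(\Gal(K/k(\mu_{p^n})),\mu_{p^n}(k))$ vanishes is exactly the bookkeeping the paper leaves implicit in ``follows immediately''), and part $(b)$ is proved, as in the paper, from Lemma \ref{lem:GWW} together with the injectivity of the inflation map.
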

\begin{proof}
	Part $(a)$ follows immediately from Propositon \ref{prop:cohomology}.
	Part $(b)$ follows from Lemma \ref{lem:GWW} and the injectivity of the inflation map
	\begin{equation*}
		\HH^1(k(\mu_{2^n})/k, \mu_{2^n}) \longrightarrow \HH^1(K/k, \mu_{2^n}). \qedhere
	\end{equation*} 
\end{proof}

\begin{lemma} \label{lem:pq}
	Let $d \in \NN$ and let $k$ be a field with $\cha(k) = 0$.
	Assume that 
	$$\HH^1(k(\mu_d)/k, \mu_d) \neq 0.$$
	Then either
	\begin{enumerate}
		\item[$(a)$] There exist distinct primes $p,q \mid d$ with $p^n \mid\mid d$
		for some $n \in \NN$ such that
		$$\HH^1( k(\mu_q)/k, \mu_{p^n}(k(\mu_q))) \neq 0,$$
	\end{enumerate}
	or
	\begin{enumerate}
		\item[$(b)$] $2^n \mid \mid d$ for some $n \geq 2$ and $(k,2^n)$ is the special case,
		i.e.
		$$\HH^1( k(\mu_{2^n})/k, \mu_{2^n}) \neq 0.$$
	\end{enumerate}
\end{lemma}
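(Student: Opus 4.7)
The plan is to use the prime-power decomposition of Proposition \ref{prop:cohomology}(a) to isolate one prime $p$ causing the non-vanishing, and then to split on whether $(k,p^n)$ is the special case. First I apply Proposition \ref{prop:cohomology}(a) to write
$$\HH^1(k(\mu_d)/k,\mu_d)\cong\prod_{p^n\mid\mid d} \HH^1(k(\mu_d)/k,\mu_{p^n}),$$
so that some prime-power term $\HH^1(k(\mu_d)/k,\mu_{p^n})$ with $p^n\mid\mid d$ is non-zero. If $(k,p^n)$ is the special case, then Definition \ref{def:special} forces $p=2$ and $n\geq 2$, hence $2^n\mid\mid d$, and Lemma \ref{lem:GWW} directly gives conclusion~$(b)$.

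Assume now that $(k,p^n)$ is not special. By Proposition \ref{prop:cohomology}(b),
$$\HH^1(k(\mu_d)/k,\mu_{p^n})\cong\Hom(\Gal(k(\mu_d)/k(\mu_{p^n})),\mu_{p^n}(k))\neq 0,$$
so in particular $\mu_p\subset k$ and $\Gal(k(\mu_d)/k(\mu_{p^n}))$ has non-trivial $p$-primary part. Writing $d=p^n m$ with $\gcd(p,m)=1$, this group embeds by restriction into $(\ZZ/m\ZZ)^*\cong\prod_{q^{a_q}\mid\mid m}(\ZZ/q^{a_q}\ZZ)^*$. Since each factor $(\ZZ/q^{a_q}\ZZ)^*$ is cyclic with trivial $p$-part unless $p\mid q-1$, there must exist a prime $q\mid m$ (so $q\mid d$, $q\neq p$) for which the projection to $(\ZZ/q^{a_q}\ZZ)^*$ has non-trivial $p$-part. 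This projection equals $\Gal(k(\mu_{q^{a_q}},\mu_{p^n})/k(\mu_{p^n}))$, and since the extension $k(\mu_{q^{a_q}},\mu_{p^n})/k(\mu_q,\mu_{p^n})$ has $q$-power degree, the $p$-part of $\Gal(k(\mu_q,\mu_{p^n})/k(\mu_{p^n}))$ is equally non-trivial. A second application of Proposition \ref{prop:cohomology}(b) with $K=k(\mu_q,\mu_{p^n})$ then yields $\HH^1(k(\mu_q,\mu_{p^n})/k,\mu_{p^n})\neq 0$.

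Finally, inflation--restriction for the tower $k\subset k(\mu_q)\subset k(\mu_q,\mu_{p^n})$ embeds $\HH^1(k(\mu_q)/k,\mu_{p^n}(k(\mu_q)))$ into $\HH^1(k(\mu_q,\mu_{p^n})/k,\mu_{p^n})$ with cokernel controlled by $\HH^1(k(\mu_q,\mu_{p^n})/k(\mu_q),\mu_{p^n})$. The latter vanishes by Proposition \ref{prop:cohomology}(b) over the base $k(\mu_q)$, since $k(\mu_q)(\mu_{p^n})=k(\mu_q,\mu_{p^n})$ makes the target Hom-group trivial; one only needs $(k(\mu_q),p^n)$ to also not be the special case, which follows from $k(\mu_q)\cap\QQ(\mu_{2^n})\supseteq k\cap\QQ(\mu_{2^n})$ together with the fact that failure of total reality is inherited by overfields. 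The inflation--restriction sequence then gives $\HH^1(k(\mu_q)/k,\mu_{p^n}(k(\mu_q)))\cong\HH^1(k(\mu_q,\mu_{p^n})/k,\mu_{p^n})\neq 0$, which is conclusion~$(a)$. The main obstacle I foresee is the second paragraph's extraction of the prime $q$: one must confirm that a non-trivial $p$-quotient of a subgroup of the product $(\ZZ/m\ZZ)^*$ actually descends, via projection, to a non-trivial $p$-part of some single $q$-factor.
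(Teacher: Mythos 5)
Your proof is correct, and its skeleton matches the paper's: isolate a prime power $p^n\mid\mid d$ via Proposition~\ref{prop:cohomology}(a), dispose of the special case via Lemma~\ref{lem:GWW} to obtain conclusion~$(b)$, and otherwise extract a second prime $q$. Your extraction of $q$ (projecting the $p$-part of $\Gal(k(\mu_d)/k(\mu_{p^n}))$ onto the factors of $(\ZZ/m\ZZ)^*$) is a group-theoretic rephrasing of the paper's tower argument $k(\mu_{p^n})\subset k(\mu_{p^nq_1^{n_1}})\subset\cdots\subset k(\mu_d)$, and the worry you flag at the end is unfounded: an element of order $p$ in a subgroup of a finite product of abelian groups has a component of order $p$, so some single projection has non-trivial $p$-part. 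Where you genuinely diverge is the endgame. The paper stays inside $k(\mu_q)$: it sets $\mu_{p^n}(k(\mu_q))=\mu_{p^m}$ for the largest $m\le n$ with $\mu_{p^m}\subset k(\mu_q)$, notes $k(\mu_{p^m})\subset k(\mu_{p^n})\cap k(\mu_q)$ so that $p\mid[k(\mu_q):k(\mu_{p^m})]$, and applies Lemma~\ref{lem:non-trivial} directly. You instead pass to the compositum $K=k(\mu_q,\mu_{p^n})$, get $\HH^1(K/k,\mu_{p^n})\neq 0$ from Proposition~\ref{prop:cohomology}(b), and descend by inflation--restriction, killing the restriction target because $k(\mu_q)(\mu_{p^n})=K$. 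Both routes are valid: yours requires the extra check (which you carry out correctly) that $(k(\mu_q),p^n)$ is not special, whereas the paper's relies on Lemma~\ref{lem:non-trivial} covering, via its part~$(b)$, the possibility that $(k,p^m)$ is special even though $(k,p^n)$ is not --- a case your compositum argument sidesteps entirely.
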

\begin{proof}
	By Proposition \ref{prop:cohomology}$(a)$, there exists some
	prime $p$ with $p^n \mid\mid d$	such that 
	$$ \HH^1(k(\mu_d)/k, \mu_{p^n}) \neq 0.$$
	If $(k,p^n)$ is the special case, then
	the result holds by Lemma~\ref{lem:GWW}.
	Otherwise, Lemma \ref{lem:non-trivial} implies that
	\begin{equation} \label{eqn:p1}
		\mu_p \subset k \quad \mbox{and} \quad p \mid [k(\mu_d):k(\mu_{p^n})].
	\end{equation}
	We claim that in this situation there exists some prime $q \mid d/p^n$,
	such that
	\begin{equation} \label{eqn:p2}
		p \mid [k(\mu_q) : k(\mu_{p^n}) \cap k(\mu_q) ].
	\end{equation}
	Indeed, consider a factorisation $d = p^nq_1^{n_1}\cdots q_r^{n_r}$ of $d$
	and the associated tower of fields
	$$k(\mu_{p^n}) \subset k(\mu_{p^{n} q_1^{n_1}}) \subset \cdots \subset k(\mu_{d}).$$
	By \eqref{eqn:p1} we obtain
	$$p \mid [k(\mu_{p^{n} q_1^{n_1} \cdots q_i^{n_{i}}}): k(\mu_{p^{n} q_1^{n_1} \cdots q_{i-1}^{n_{i-1}}})]$$
	for some $i$. Hence
	$$p \mid [k(\mu_{p^{n} q_i^{n_{i}}}): k(\mu_{p^n})].$$
	Thus we have
	\begin{equation} \label{eqn:q}
		p \mid [k(\mu_{p^{n} q_i}): k(\mu_{p^n})],
	\end{equation}
	since $[k(\mu_{p^{n} q_i^{n_{i}}}): k(\mu_{p^n q_i})]$ is a power of $q_i$.
	From this we deduce \eqref{eqn:p2}, with $q = q_i$.
	
	Next choose the largest $m \leq n$ such that $\mu_{p^m} \subset k(\mu_q)$.
	Note that $m \geq 1$ by \eqref{eqn:p1}. Then
	$\mu_{p^n}(k(\mu_q)) = \mu_{p^m}$, and hence
	$$\HH^1( k(\mu_q)/k, \mu_{p^n}(k(\mu_q))) = \HH^1( k(\mu_q)/k, \mu_{p^m}).$$
	As $k(\mu_{p^m}) \subset k(\mu_{p^n}) \cap k(\mu_q)$,
	it follows from \eqref{eqn:p2} that
	$$p \mid [k(\mu_q) : k(\mu_{p^m})].$$
	Hence by Lemma \ref{lem:non-trivial}, we obtain the result.
\end{proof}

\subsection{The Hilbert scheme of lines} \label{sec:Hilbert}

Let $d \geq 3$ and let $S$ be a smooth diagonal surface of the form
$$
	a_0x_0^d + a_1x_1^d + a_2x_2^d+ a_3x_3^d = 0,
$$
over a field $k$ whose characteristic is coprime to $d$.
A simple calculation using the explicit description of the lines \eqref{eqn:lines} shows that the Hilbert scheme of 
lines of $S$ is isomorphic to a disjoint union $\mathcal{L} = \mathcal{L}_1 \sqcup \mathcal{L}_2 \sqcup \mathcal{L}_3$,
where
\begin{align} \label{eqn:general}
	\mathcal{L}_1 &: \{ x^d = -a_1/a_0 \} \times \{ y^d = - a_2/a_3 \}, \nonumber \\
	\mathcal{L}_2 &: \{ x^d = -a_2/a_0 \} \times \{ y^d = - a_3/a_1 \}, \\
	\mathcal{L}_3 &: \{ x^d = -a_3/a_0 \} \times \{ y^d = - a_1/a_2 \}. \nonumber 
\end{align}
Here each $\mathcal{L}_i$ is viewed as a subscheme of $\mathbb{A}^2$,
and is finite \'{e}tale of degree $d^2$ over~$k$. We keep this notation throughout the paper.
A case of special interest for us occurs for surfaces of the shape
$$
	x_0^d + ax_1^d + bx_2^d+ abx_3^d = 0,
$$
where we have 
\begin{align} \label{eqn:special}
	\mathcal{L}_1 &: \{ x^d = -a \} \times \{ y^d = - a \}, \nonumber \\
	\mathcal{L}_2 &: \{ x^d = -b \} \times \{ y^d = - b \}, \\
	\mathcal{L}_3 &: \{ x^d = -ab \} \times \{ y^d = - a/b \}. \nonumber 
\end{align}

\section{Positive results}
The aim of this section is to prove that the Hasse principle for lines
holds in the cases stated in Theorem \ref{thm:cohomology}.
\subsection{The Hasse principle for finite \'{e}tale schemes}
We begin with some general remarks on the Hasse principle for 
finite \'{e}tale schemes, following our exposition given in \cite[\S2.1]{JL15}.

Let $X$ be a finite \'{e}tale scheme over a field $k$.
Associated to this is a splitting field $K/k$, whose Galois group $\Gamma$
acts faithfully on the set $X(K)$ in a natural way. The following lemma
gives a group theoretic criterion for the Hasse principle to fail 
when $k$ is a number field.

\begin{lemma}\label{lem:Chebotarev}
	Let $X$ be a finite \'{e}tale scheme over a number field $k$. Let $K/k$
	denote the splitting field, with Galois group $\Gamma$. Then $X$ is locally soluble
	at all but finitely many places of $k$, but not soluble over $k$,
	if and only if on the associated
	$\Gamma$-set $X(K)$ each conjugacy class of $\Gamma$ acts with a fixed point,
	but the group $\Gamma$ acts without a fixed point.
	
	In which case, for each place $v$ of $k$ which is either archimedean
    or unramified in~$K$, the scheme $X$ admits a $k_v$-point.
\end{lemma}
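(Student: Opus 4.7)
The plan is to translate everything into statements about the $\Gamma$-set $X(K)$ and then invoke Chebotarev's density theorem. Recall that a finite étale $k$-scheme with splitting field $K$ corresponds functorially to the finite $\Gamma$-set $X(K)$; under this dictionary, $k$-points of $X$ correspond to $\Gamma$-fixed points of $X(K)$. More generally, for any place $v$ of $k$, fixing an embedding $\bar k \inj \bar k_v$ extending $k \inj k_v$ identifies the decomposition group $\Gamma_v \leq \Gamma$ with the absolute Galois group of $k_v$ acting on $X(K) = X(\bar k)$, so the $k_v$-points of $X$ correspond to $\Gamma_v$-fixed points on $X(K)$.

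First I would isolate the places where $\Gamma_v$ is cyclic and governed by a distinguished conjugacy class of $\Gamma$: the archimedean places (where $\Gamma_v$ is trivial or generated by a complex conjugation) and the finite places unramified in $K$ (where $\Gamma_v$ is generated by a Frobenius element $\mathrm{Frob}_v$). In each of these cases, $X(k_v) \neq \emptyset$ is equivalent to some element of the corresponding conjugacy class of $\Gamma$ having a fixed point on $X(K)$. This reformulation already gives the $(\Leftarrow)$ direction and the final "in which case" assertion for free: if every conjugacy class fixes a point, then $X$ acquires a point at every archimedean place and every unramified finite place, while $\Gamma$ having no fixed point means $X(k) = \emptyset$.

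For the $(\Rightarrow)$ direction I would argue as follows. Since $X$ is locally soluble away from a finite set of places, almost every unramified finite $v$ has $\mathrm{Frob}_v$ acting with a fixed point on $X(K)$. The Chebotarev density theorem guarantees that every conjugacy class of $\Gamma$ is realised by $\mathrm{Frob}_v$ for infinitely many unramified primes $v$; excluding the finitely many bad places therefore excludes no conjugacy class. Consequently every conjugacy class of $\Gamma$ acts with a fixed point on $X(K)$, while $X(k) = \emptyset$ forces $\Gamma$ itself to act without a fixed point.

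The deep input is Chebotarev; the main (mild) obstacle is simply setting up the correspondence between $k_v$-points and decomposition-group fixed points cleanly. Once the dictionary between $X$, the $\Gamma$-set $X(K)$, and its $\Gamma_v$-orbit structure is in place, both directions of the equivalence are essentially formal.
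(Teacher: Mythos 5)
Your proposal is correct and is exactly the argument the paper has in mind: the paper's proof is a one-line citation of the Chebotarev density theorem (via \cite[Lem.~2.2]{JL15}), and your write-up simply fills in the standard details --- the dictionary between $k_v$-points and fixed points of decomposition groups, the fact that at archimedean and unramified places these groups are cyclic and generated by a distinguished conjugacy class, and Chebotarev to see that excluding finitely many places excludes no conjugacy class.
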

\begin{proof}
	This is a simple application of the Chebotarev density theorem;
	see \cite[Lem.~2.2]{JL15}.
\end{proof}

\subsection{The Hasse principle for lines}
We now show that the Hasse principle for lines
holds in the cases stated in Theorem \ref{thm:cohomology}.
We begin with the case where $\mu_d \subset k$.

\begin{proposition} \label{prop:mu_d}
	Let $d \geq 3$ and let $k$ be a number field with $\mu_d \subset k$.
	Suppose that $(-1) \not \in k^{*d}$ if $d$ is even.
	Then the Hasse principle for lines holds for diagonal surfaces of degree $d$ over $k$.
\end{proposition}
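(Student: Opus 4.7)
The plan is to combine Lemma~\ref{lem:Chebotarev} with a classical group-theoretic fact on three-subgroup covers, then close with a character calculation exploiting the algebraic relations among the coefficients of the surface.

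Since $\mu_d \subset k$, Kummer theory identifies the splitting field of the Hilbert scheme $\mathcal{L}$ with $K = k(\alpha_i^{1/d}, \beta_i^{1/d} : i = 1, 2, 3)$, and $\Gamma := \Gal(K/k)$ is abelian, embedding into $\mu_d^6$ via the Kummer characters $\chi_{\alpha_i}, \chi_{\beta_i}$. A $k$-point on $\mathcal{L}_i$ exists iff the subgroup $N_i := \ker \chi_{\alpha_i} \cap \ker \chi_{\beta_i}$ equals $\Gamma$, and since the six characters have trivial common kernel, $N_1 \cap N_2 \cap N_3 = \{1\}$.

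Suppose $\mathcal{L}$ is everywhere locally soluble but has no $k$-point. Lemma~\ref{lem:Chebotarev} yields $\Gamma = N_1 \cup N_2 \cup N_3$ with every $N_i$ proper. By Scorza's classical theorem, a group that is the union of three proper subgroups has $V_4 := \ZZ/2\ZZ \times \ZZ/2\ZZ$ as its quotient by the intersection of those three subgroups, with the subgroups being preimages of the three index-$2$ subgroups. Since $N_1 \cap N_2 \cap N_3 = \{1\}$, this forces $\Gamma \cong V_4$ with the $N_i$ being its three index-$2$ subgroups. If $d$ is odd, $\Gamma$ has odd order, a contradiction; so $d$ is even.

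For $d$ even, label $N_i = \{1, g_i\}$ and let $\chi_i \in \Hom(V_4, \mu_2)$ be the non-trivial character trivial on $N_i$, so that $\chi_1 \chi_2 \chi_3 = 1$ and $\chi_3 = \chi_1 \chi_2$. Each of $\chi_{\alpha_i}, \chi_{\beta_i}$ lies in $\{1, \chi_i\}$; write $\chi_{\alpha_i} = \chi_i^{a_i}$ and $\chi_{\beta_i} = \chi_i^{b_i}$ with $a_i, b_i \in \{0,1\}$ and $(a_i, b_i) \neq (0,0)$. The cyclic relations $\alpha_i / \alpha_{i+1} = -\beta_{i+2}$ (indices modulo $3$) together with $\beta_1 \beta_2 \beta_3 = -1$ translate, using $\chi_3 = \chi_1 \chi_2$, into an $\FF_2$-linear system on the $a_i, b_i$ (with $\chi_{-1}$ written as $c_1 \chi_1 + c_2 \chi_2$). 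Direct comparison of these equations forces $a_i = b_i$ for each $i$, and hence $a_i = b_i = 1$. Substituting back gives $\chi_{-1} = \chi_{\beta_1} \chi_{\beta_2} \chi_{\beta_3} = \chi_1 \chi_2 \chi_3 = 1$, i.e., $-1 \in k^{*d}$, contradicting the hypothesis.

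The main obstacle I expect is justifying the Scorza-style reduction cleanly (possibly as an auxiliary lemma) and then organising the resulting $\FF_2$-linear system so that the forced equalities $a_i = b_i = 1$ are transparent; once $\Gamma \cong V_4$ is established, the identity $\chi_1 \chi_2 \chi_3 = 1$ together with the cyclic surface relations combine rigidly to yield the contradiction.
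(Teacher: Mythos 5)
Your proof is correct, and while its first half mirrors the paper's argument, the decisive step for even $d$ takes a genuinely different route. Both proofs begin the same way: Lemma~\ref{lem:Chebotarev} gives $\Gamma = N_1 \cup N_2 \cup N_3$ with each stabiliser proper and with trivial common intersection, and one concludes $\Gamma \cong \ZZ/2\ZZ \times \ZZ/2\ZZ$ -- the paper proves this by hand (pairwise intersections vanish, then an index count), whereas you invoke Scorza's theorem, which packages exactly that computation; either way the odd case dies on parity. For even $d$ the paper then writes down the generic Galois group $\ZZ/2\ZZ \times (\ZZ/d\ZZ)^3$ with its explicit action on the lines, computes its $2$-torsion, pins $\Gamma$ down as $\{0,\gamma_1,\gamma_2,\gamma_3\}$, and observes that these elements fix $\zeta_{2d}$, forcing $\mu_{2d} \subset k$. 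You instead stay on the character side: the relations $\alpha_i/\alpha_{i+1} = -\beta_{i+2}$ and $\beta_1\beta_2\beta_3 = -1$ among the coefficients in \eqref{eqn:general} become an $\FF_2$-linear system on the Kummer characters, and comparing the $\chi_1$- and $\chi_2$-components of consecutive relations does force $a_i = b_i$ (I checked the system: e.g.\ the $\chi_1$-components of the relations for $i=2$ and $i=3$ give $a_3+b_1 = a_1+a_3$, so $a_1=b_1$, and similarly for the others), whence $a_i=b_i=1$ and $\chi_{-1} = \chi_1\chi_2\chi_3 = 1$, i.e.\ $-1 \in k^{*d}$. This is the Kummer-dual of the paper's computation, but it avoids the explicit generic group and makes the role of the hypothesis $(-1) \notin k^{*d}$ transparent. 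Two small points to nail down in the write-up: $\chi_{-1}$ is indeed a character of $\Gamma$ because $-1 = \beta_1\beta_2\beta_3$ implies $(-1)^{1/d} \in K$; and the three $N_i$ are necessarily distinct (otherwise $\Gamma$ would be a union of two proper subgroups), so they really are the three index-two subgroups and $\chi_1\chi_2\chi_3 = 1$ holds.
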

\begin{proof}
	Assume that there exists a diagonal surface $S$ of degree $d$ over $k$ which fails the Hasse
	principle for lines. Let $\mathcal{L}$ be its Hilbert scheme of lines,
	with splitting field $K$ and Galois group $\Gamma$.
	Using the explicit description \eqref{eqn:general} and the fact that $\mu_d \subset k$, one sees that $\Gamma$ is abelian. Thus the 
	stabiliser of each point $\ell \in \mathcal{L}_i(K)$
	depends only on $i$; denote the corresponding subgroup by $\Gamma_i$.
	Note that 
	\begin{equation} \label{eqn:intersection_trivial}
		\Gamma_1 \cap \Gamma_2 \cap \Gamma_3 = 0
	\end{equation}
	as the action of $\Gamma$ on $\mathcal{L}(K)$ is faithful.
	By Lemma \ref{lem:Chebotarev} we have 
	\begin{equation} \label{eqn:Chebotarev}
		\Gamma = \Gamma_1 \cup \Gamma_2 \cup \Gamma_3 \quad \mbox{and} \quad \Gamma_i \neq \Gamma \quad \mbox{for } i=1,2,3.
	\end{equation}	  
	We claim that 	
	\begin{equation} \label{eqn:i<>j}
		\Gamma_i \cap \Gamma_j = 0 \quad \mbox{for all } i \neq j.
	\end{equation}
	To prove this, without loss of generality we may take $(i,j) = (1,2)$.
	Suppose that there exists some non-zero $\gamma \in \Gamma_1 \cap \Gamma_2$.
	As a group can never be a union of two proper subgroups,
	we see that there exists some $\gamma' \in \Gamma \setminus (\Gamma_1 \cup \Gamma_2) \subset \Gamma_3$.
	To prove \eqref{eqn:i<>j}, by \eqref{eqn:Chebotarev} it suffices to show that
	$\gamma + \gamma' \not \in \Gamma_i$ for each $i$. To see this, note that $\gamma$
	stabilises each point of $\mathcal{L}_1(K)$ and $\mathcal{L}_2(K)$,
	whereas $\gamma'$ does not. Similarly, $\gamma'$ stabilises each point
	of $\mathcal{L}_3(K)$, whereas by \eqref{eqn:intersection_trivial}
	we see that  $\gamma$ does not. This proves \eqref{eqn:i<>j}.
	
	Next, by \eqref{eqn:Chebotarev} and inclusion-exclusion we have 
	$$
		\frac{1}{[\Gamma:\Gamma_1]} + \frac{1}{[\Gamma:\Gamma_2]}+ \frac{1}{[\Gamma:\Gamma_3]} > 1.
	$$
	Hence without loss of generality
	$$[\Gamma:\Gamma_1] = 2 \quad \mbox{and} \quad [\Gamma:\Gamma_2] \leq 3.$$
	Moreover, by \eqref{eqn:i<>j}, the natural map
	$$ \Gamma \to \Gamma/\Gamma_1 \times \Gamma/\Gamma_2$$
	is an embedding, so $\# \Gamma \leq 6$.
	However a cyclic group cannot satisfy Lemma \ref{lem:Chebotarev}, hence
	\begin{equation} \label{eqn:Z/2ZxZ/2Z}
		\Gamma \cong \ZZ/2\ZZ \times \ZZ/2\ZZ.
	\end{equation}
	
	To proceed, we need to explicitly construct the generic 
	Galois group together with its action
	on the Hilbert scheme of lines.
	Consider a surface of the form 
	\begin{equation} \label{eqn:generic}
		x_0^d + a_1x_1^d+ a_2x_2^d+ a_3x_3^d = 0
	\end{equation}		
	over the function field $k(a_1,a_2,a_3)$. 
	If $d$ is odd, then the splitting field of \eqref{eqn:generic} is
	$$k(\sqrt[d]{a_1}, \sqrt[d]{a_2}, \sqrt[d]{a_3}),$$
	whose Galois group is isomorphic to $(\ZZ/d\ZZ)^3$.
	Thus we see	that $\Gamma$ has odd order, which contradicts \eqref{eqn:Z/2ZxZ/2Z}.
	Hence the Hasse principle for lines holds when $d$ is odd.
	Now suppose that $d$ is even. The splitting field of \eqref{eqn:generic} is
	$$k(\mu_{2d}, \sqrt[d]{a_1}, \sqrt[d]{a_2}, \sqrt[d]{a_3}),$$
	whose Galois group $G$ is isomorphic to $\ZZ/2\ZZ \times (\ZZ/d\ZZ)^3.$
	It is generated by the elements $\sigma_0,\sigma_1,\sigma_2,\sigma_3$, where
	$$
	\begin{array}{ll}
		\sigma_0(\zeta_{2d}) = - \zeta_{2d},\quad  &\sigma_0(\sqrt[d]{a_i}) =\sqrt[d]{a_i}, \quad  i=1,2,3,\\
		\sigma_i(\zeta_{2d}) = \zeta_{2d},\quad &\sigma_i(\sqrt[d]{a_i}) = \zeta_{2d}^2 \sqrt[d]{a_i}, \quad
		\sigma_i(\sqrt[d]{a_j}) = \sqrt[d]{a_j}, \quad i=1,2,3,\, j \neq i,
	\end{array}
	$$
	and $\zeta_{2d}$ is a fixed choice of primitive $2d$-th root of unity.
	The action of $G$ on the Hilbert scheme of lines of \eqref{eqn:generic} (see \eqref{eqn:general})
	gives rise to a faithful representation
	\begin{align*}
		\rho: G &\longrightarrow (\ZZ/d\ZZ)^2 \times (\ZZ/d\ZZ)^2 \times (\ZZ/d\ZZ)^2 \\
		\sigma_0 &\mapsto ((d/2,d/2), (d/2,d/2), (d/2,d/2)) \\
		\sigma_1 &\mapsto ((1,0), (0,-1), (0,1)) \\
		\sigma_2 &\mapsto ((0,1), (1,0), (0,-1)) \\
		\sigma_3 &\mapsto ((0,-1), (0,1), (1,0)).
	\end{align*}
	We identify $G$ with its image under this map, viewed as an additive subgroup
	of a free $(\ZZ/d\ZZ)^2$-module of rank $3$.
	With this notation, an element of $G$ fixes a line if and only if one of its
	coordinates is $(0,0)$, and an element of $G$
	fixes $\mu_{2d}$ if and only if it lies in the subgroup $\langle \sigma_1,\sigma_2,\sigma_3 \rangle.$
	
	We now return to the group $\Gamma$. This may be
	identified with a subgroup of $G$, acting on the lines of $S$ via the above representation.	
	By \eqref{eqn:i<>j} and \eqref{eqn:Z/2ZxZ/2Z}, we see that $\Gamma$ consists of elements
	of the form $(0,0,0), (t_1,t_2,0), (t_1,0,t_3)$, and $(0,t_2,t_3)$,
	where $t_1, t_2, t_3$ are $2$-torsion elements in $(\ZZ/d\ZZ)^2$.
	However the $2$-torsion of the generic Galois group $G$ is generated by
   	\begin{align*}
   		&((d/2,d/2), (d/2,d/2), (d/2,d/2)), \\
   		&((d/2,0), (0,d/2), (0,d/2)), \\
 	   	&((0,d/2), (d/2,0), (0,d/2)), \\
   		&((0,d/2), (0,d/2), (d/2,0)).
	\end{align*}
	The only $2$-torsion elements with exactly one coordinate $(0,0)$ are
	\begin{align*}
   		&\gamma_1 = ((0,0), (d/2,d/2), (d/2,d/2)), \\
		&\gamma_2 = ((d/2,d/2), (0,0), (d/2,d/2)), \\
   		&\gamma_3 = ((d/2,d/2), (d/2,d/2), (0,0)),
   	\end{align*}
   	and hence $\Gamma = \{0, \gamma_1,\gamma_2,\gamma_3\}$.
	However each $\gamma_i$ lies in $\langle \sigma_1,\sigma_2,\sigma_3 \rangle$,
	so fixes $\mu_{2d}$. Thus $\mu_{2d} \subset k$, which contradicts 
	the assumption that $(-1) \not \in k^{*d}$. 
	Therefore the Hasse principle for lines holds when $d$ is even.
\end{proof}

\begin{proposition}
	Let $d \geq 3$ and let $k$ be a number field such that
	$$\HH^1(k(\mu_d)/k, \mu_d) = 0.$$
	Suppose moreover that 
	$(-1) \not \in k^{*d}$
	if $d$ is even.
	Then the Hasse principle for lines holds for diagonal surfaces of degree $d$ over $k$.
\end{proposition}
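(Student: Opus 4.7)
The plan is to reduce to Proposition~\ref{prop:mu_d} by base-changing to the field $K = k(\mu_d)$. The overall strategy is: local solubility over $k$ implies local solubility over $K$, so Proposition~\ref{prop:mu_d} produces a line over $K$, and the hypothesis $\HH^1(K/k,\mu_d)=0$ then descends this line to one defined over $k$. The key technical observation, used twice, is the description \eqref{eqn:H^1}, which identifies $\HH^1(K/k,\mu_d)$ with $\ker(k^*/k^{*d} \to K^*/K^{*d})$; hence the hypothesis is equivalent to saying that this restriction map is injective.

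First, I would verify that the hypothesis on $(-1)$ transfers upstairs, namely that $(-1) \notin K^{*d}$ when $d$ is even. If instead $(-1) \in K^{*d}$, then the image of $(-1)$ in $K^*/K^{*d}$ is trivial, and injectivity of $k^*/k^{*d} \to K^*/K^{*d}$ forces $(-1) \in k^{*d}$, contradicting the assumption. Thus Proposition~\ref{prop:mu_d} is indeed available to us for diagonal surfaces over $K$.

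Now assume $S$ is a diagonal surface of degree $d$ over $k$ having a line over every completion $k_v$. For any place $w$ of $K$ lying above $v$, we have $k_v \subset K_w$, so the base-changed surface $S_K := S \times_k K$ has a line over every completion $K_w$. Applying Proposition~\ref{prop:mu_d} to $S_K$ produces a line defined over $K$, i.e.\ a $K$-point of the Hilbert scheme $\mathcal{L} = \mathcal{L}_1 \sqcup \mathcal{L}_2 \sqcup \mathcal{L}_3$ from \eqref{eqn:general}. Such a point lies in some component $\mathcal{L}_i$, whose two defining ratios are elements of $k^*$ which must both be $d$th powers in $K$. Invoking the injectivity of $k^*/k^{*d} \to K^*/K^{*d}$ once more, both ratios are already $d$th powers in $k$, so $\mathcal{L}_i(k) \neq \emptyset$ and $S$ carries a line over $k$.

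The only delicate point---more of a bookkeeping step than a genuine obstacle---is making sure the hypothesis $(-1) \notin k^{*d}$ propagates to $K$ before applying Proposition~\ref{prop:mu_d}; without this check the strategy would collapse in even degree. Once that transfer is in place, the argument is essentially a clean two-step application of the vanishing of $\HH^1(K/k,\mu_d)$: once to move the $(-1)$-hypothesis upstairs, and once to descend the $d$th-power conditions coming from a line over $K$ back down to $k$.
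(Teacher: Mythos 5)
Your proposal is correct and follows essentially the same route as the paper: base change to $k(\mu_d)$, check via \eqref{eqn:H^1} that the hypothesis $(-1)\notin k^{*d}$ passes up to $k(\mu_d)$, apply Proposition~\ref{prop:mu_d} to get a line over $k(\mu_d)$, and then use the vanishing of $\HH^1(k(\mu_d)/k,\mu_d)$ again to descend the two $d$th-power conditions from \eqref{eqn:general} back to $k$. No issues.
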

\begin{proof}
	Assume that there exists a diagonal surface $S$ 
	of degree $d$ over $k$ which fails the Hasse
	principle for lines. Then $S_{k(\mu_d)}$ contains a line
	everywhere locally. Note that if $d$ is even, then the conditions
	of the proposition and \eqref{eqn:H^1} imply that $(-1) \not \in k(\mu_d)^{*d}$.
	Hence by Proposition \ref{prop:mu_d} 
	we see that $S_{k(\mu_d)}$
	contains a line. Without loss of generality we have 
	$\mathcal{L}_1(k(\mu_d)) \neq \emptyset$, thus $a_1 / a_0$ 
	and $a_3/a_2$ are both $d$th powers in $k(\mu_d)$ (see \eqref{eqn:general}).
	However since $\HH^1(k(\mu_d)/k,\mu_d)=0$,
	by \eqref{eqn:H^1} we see that $a_1 / a_0$ and $a_3/a_2$ are both $d$th powers in $k$.
	Thus $S$ contains a line over~$k$, which is a contradiction.
\end{proof}

This completes the proof of the if part of Theorem \ref{thm:cohomology}.

\subsection{Proof of Theorem \ref{thm:odd_hold}}
By Theorem \ref{thm:cohomology}, it suffices to show that we have
$\HH^1(k(\mu_d)/k,\mu_d)=0$ in each of the given cases.
\begin{enumerate}
	\item If $\mu_d(k) = \{1\}$, then the group is trivial by Lemma~\ref{lem:non-trivial}$(a)$.
	\item If $\mu_d \subset k$ then it is clearly trivial.
	\item If $q \not \equiv 1 \bmod p$ for all primes $p,q \mid d$, then
		$p \nmid [ k(\mu_d) : k(\mu_{p^n})]$ for all $p^n \mid \mid d$
		(since $[ k(\mu_d) : k(\mu_{p^n})] \mid \varphi(d/p^n)]$). 
		Hence the group is trivial here by Lemma~\ref{lem:non-trivial}$(a)$. \qed
\end{enumerate}

\section{Existence of counter-examples}
The aim of this section is to construct explicit counter-examples,
which show that the conditions in Theorem \ref{thm:cohomology} are necessary.
\subsection{An existence result}
We begin with a technical lemma on the existence
of certain integers, whose primary purpose is to take care of 
the existence of a line at the bad primes.

\begin{lemma} \label{lem:beta}
	Let $k$ be a number field and let $p, q \in \NN$.
	Let $\alpha \in \OO_k$ be non-zero.
	Then there exists $\beta \in \ZZ$ such that the following
	conditions hold.
	\begin{enumerate}
	\item $\beta$ is coprime to $\alpha$.
	\item $k$ has a prime ideal $\mathfrak{b}$ such that $v_\mathfrak{b}(\beta)=1$.
	\item If $\mathfrak{p} \mid \alpha q$, then $\beta$ is a $q$th power in $k_{\mathfrak{p}}$.
	\item If $\mathfrak{p} \mid \beta$, then $\alpha$ is a $p$th power in $k_\mathfrak{p}$.	 
	\end{enumerate}
\end{lemma}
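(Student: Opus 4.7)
The plan is to take $\beta$ to be a rational prime $\ell$, selected via the Chebotarev density theorem applied to a suitable Galois extension of $\QQ$ that encodes conditions $(3)$ and $(4)$ simultaneously; conditions $(1)$ and $(2)$ will then fall out from elementary considerations about the factorisation of $\ell$ in $k$.

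First I would dispose of condition $(3)$ by a congruence argument. The set $S = \{\mathfrak{p} \subset \OO_k : \mathfrak{p} \mid \alpha q\}$ is finite, and $(k_{\mathfrak{p}}^*)^q$ is an open subgroup of $k_{\mathfrak{p}}^*$ at each $\mathfrak{p} \in S$; hence there exist integers $r_{\mathfrak{p}} \geq 1$ with $1 + \mathfrak{p}^{r_{\mathfrak{p}}} \subset (k_{\mathfrak{p}}^*)^q$. I choose a positive integer $M$ with $M \in \mathfrak{p}^{r_{\mathfrak{p}}}$ for every $\mathfrak{p} \in S$ (for example a sufficiently large power of the product of residue characteristics of the primes in $S$). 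Then any rational integer $\ell$ with $\ell \equiv 1 \pmod{M}$ automatically satisfies $\ell \in 1 + \mathfrak{p}^{r_{\mathfrak{p}}} \subset (k_{\mathfrak{p}}^*)^q$ for all $\mathfrak{p} \in S$, which is exactly condition $(3)$.

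Next I would package the remaining conditions into a single Chebotarev application. Let $\tilde{F}/\QQ$ be a finite Galois extension of $\QQ$ containing $k$, $\QQ(\mu_M)$, and all roots of $x^p - \alpha$ in a fixed algebraic closure. By the Chebotarev density theorem there exist infinitely many rational primes $\ell$ that split completely in $\tilde{F}/\QQ$, and after avoiding the finitely many primes ramified in $\tilde{F}$ or dividing $N_{k/\QQ}(\alpha)\cdot M$ one such $\ell$ is coprime to $\alpha$, giving $(1)$. Splitting completely in the subfield $k \subset \tilde{F}$ yields $v_{\mathfrak{b}}(\ell) = 1$ for every prime $\mathfrak{b} \mid \ell$ of $k$, which is $(2)$; splitting completely in $\QQ(\mu_M) \subset \tilde{F}$ forces $\ell \equiv 1 \pmod{M}$ and hence gives $(3)$ by the previous paragraph; and for any prime $\mathfrak{p} \mid \ell$ in $k$, choosing any $\mathfrak{P} \mid \mathfrak{p}$ in $\tilde{F}$ one has $\tilde{F}_{\mathfrak{P}} \cong \QQ_\ell \cong k_{\mathfrak{p}}$, so a root of $x^p - \alpha$ lies in $k_{\mathfrak{p}}$, which is $(4)$. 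Setting $\beta = \ell$ finishes the construction.

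The main obstacle is really only one of bookkeeping: packaging the required splitting behaviours in $k$, in $\QQ(\mu_M)$, and in the possibly non-Galois field generated by a root of $x^p - \alpha$ into a single Galois extension of $\QQ$ to which Chebotarev can be applied. The compatibility of these conditions is automatic, because one is asking the Frobenius to lie in the identity conjugacy class, which is always non-empty. In particular, no cohomological obstruction arises at this step, in contrast to the arguments in the rest of the paper.
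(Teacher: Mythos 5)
Your proposal is correct and follows essentially the same route as the paper: both choose $\beta$ to be a rational prime splitting completely (via Chebotarev) in a field containing $k$, the splitting field of $x^p-\alpha$, and enough roots of unity to force the congruences guaranteeing condition $(3)$. The only cosmetic difference is that the paper imposes $\beta \equiv 1 \bmod a_i^n$ through the fields $\QQ(\mu_{a_i^n})$ and invokes Hensel's lemma, whereas you use a single modulus $M$ chosen via the openness of $(k_{\mathfrak{p}}^*)^q$; these are the same argument.
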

\begin{proof}
	Note that we do not assume that $p$ and $q$ are coprime.
	Denote by $\mathfrak{a}_1,\ldots, \mathfrak{a}_r$ the prime ideals
	dividing $\alpha$ with corresponding rational primes $a_1,\ldots,a_r$,
	and let $q_1,\ldots, q_s$ be the rational primes dividing $q$.
	Let $n$ and $m$ be large positive integers 
	(these are chosen so that Hensel's lemma applies below).
	We then choose~$\beta$ to be a rational prime which is completely
	split in the field extension
	$$\QQ \subset k(\mu_p,\alpha^{1/p}, \mu_{a_1^{n}},\ldots, \mu_{a_r^{n}},
	\mu_{q_1^{m}},\ldots,\mu_{q_s^{m}} ).$$
	Such primes exist by the Chebotarev density theorem, and we may moreover assume
	that $(1)$ holds. Also $(2)$ holds as $\beta$ is completely split in $k$.
	
	Next, as $\beta$ is completely split in $\smash{\QQ(\mu_{a_i^{n}})}$,
	we see that $\beta \equiv 1 \bmod a_i^{n}$ for each $i =1,\ldots,r$. Hence 
	$\beta$ is a $q$-th power in $\QQ_{a_i} \subset k_{\mathfrak{a}_i}$ by Hensel's lemma.
	A similar argument applies to the $q_i$, thus $(3)$ holds.
	
	Finally let $\mathfrak{p} \mid \beta$. Then $\mathfrak{p}$
	is completely split in $k(\mu_p,\alpha^{1/p})$, hence $\alpha$ is a $p$-th
	power in $k_{\mathfrak{p}}$, thus $(4)$ holds. This completes the proof.
\end{proof}

\subsection{Existence of counter-examples}
We now construct counter-examples as required for Theorem \ref{thm:cohomology},
starting with the easiest case where $d$ is even and $(-1) \in k^{*d}.$

\begin{lemma} \label{lem:-1}
	Let $d \geq 4$ be even and let $k$ be a  number field with
	$$(-1) \in k^{*d}.$$
	Then there exists a diagonal surface of degree $d$ over $k$ which fails the Hasse principle for lines.
\end{lemma}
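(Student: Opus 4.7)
The plan is to construct counter-examples from the special form of diagonal surface in \eqref{eqn:special}, namely
\[
S_{\alpha,\beta}: \quad x_0^d + \alpha^{d/2} x_1^d + \beta^{d/2} x_2^d + (\alpha\beta)^{d/2} x_3^d = 0,
\]
with $\alpha, \beta \in k^*$ to be chosen. Writing $a = \alpha^{d/2}$ and $b = \beta^{d/2}$ in the notation of \eqref{eqn:special}, the auxiliary squares $a^2 = \alpha^d$ and $b^2 = \beta^d$ are automatically $d$th powers, so combined with $(-1) \in k^{*d}$ the three line-existence conditions collapse to: a $k$-line in $\mathcal{L}_i$ exists iff $\alpha^{d/2}$, $\beta^{d/2}$, or $(\alpha\beta)^{d/2}$ lies in $k^{*d}$, for $i=1,2,3$ respectively (for $\mathcal{L}_3$ the two conditions $ab \in k^{*d}$ and $a/b \in k^{*d}$ are equivalent, their ratio being $b^2 \in k^{*d}$).

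The key observation is then that $\mu_{d/2}(k) \subset k^{*2}$. Writing $d = 2^n e$ with $e$ odd, the hypothesis $(-1) \in k^{*d}$ is equivalent to $\mu_{2^{n+1}} \subset k$; any element of $\mu_{d/2}(k)$ decomposes as a product of a 2-power-order part, which lies in $\mu_{2^{n-1}} = (\mu_{2^n})^2 \subset k^{*2}$, and an odd-order part, which is automatically a square. Consequently $\gamma^{d/2} \in k^{*d}$ iff $\gamma \in k^{*2}$, so $S_{\alpha,\beta}$ has a $k$-line iff one of $\alpha, \beta, \alpha\beta$ is a square in $k$. The same computation over $k_v$ gives the sufficient condition $\gamma \in k_v^{*2} \Rightarrow \gamma^{d/2} \in k_v^{*d}$, so it suffices to produce $\alpha, \beta \in k^*$ such that $\alpha, \beta, \alpha\beta$ are all non-squares in $k$, but at every place $v$ at least one of them is a square in $k_v$.

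Since $(-1) \in k^{*d}$ forces $i \in k$, $k$ is totally imaginary and archimedean places are trivial. For a finite $\mathfrak{p}$ unramified in $L := k(\sqrt{\alpha}, \sqrt{\beta})$, the Frobenius $\sigma_\mathfrak{p} \in \Gal(L/k) \cong (\ZZ/2\ZZ)^2$ fixes at least one of $\sqrt{\alpha}, \sqrt{\beta}, \sqrt{\alpha\beta}$ -- each non-identity element fixes exactly one -- so one of $\alpha, \beta, \alpha\beta$ is a square in $k_\mathfrak{p}$. For the remaining finitely many primes dividing $2\alpha\beta$, I apply Lemma \ref{lem:beta} with $p = q = 2$: fix $\alpha \in \OO_k$ as a uniformizer at some prime $\mathfrak{a}$, and take $\beta$ to be the rational prime the lemma produces. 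The properties of $\beta$ -- coprime to $\alpha$, valuation $1$ at some $\mathfrak{b}$, a square in $k_\mathfrak{p}$ for every $\mathfrak{p} \mid 2\alpha$, and $\alpha$ a square in $k_\mathfrak{p}$ for every $\mathfrak{p} \mid \beta$ -- simultaneously deliver the required local squareness at every ramified prime and the global non-squareness of $\alpha, \beta$, and $\alpha\beta$ (the last via $v_\mathfrak{a}(\alpha\beta) = 1$ together with $v_\mathfrak{b}(\alpha\beta) = 1$). The main obstacle is coordinating local squareness at ramified primes with the global non-squareness constraints, and this is precisely what Lemma \ref{lem:beta} is tailored to handle.
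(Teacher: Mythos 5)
Your proposal is correct and follows essentially the same route as the paper: the same surface $x_0^d + \alpha^{d/2}x_1^d + \beta^{d/2}x_2^d + (\alpha\beta)^{d/2}x_3^d = 0$ with $\beta$ supplied by Lemma \ref{lem:beta} for $p=q=2$, local solubility from ``one of $\alpha,\beta,\alpha\beta$ is a square in $k_\mathfrak{p}$'' and global failure from valuations at $\mathfrak{a}$ and $\mathfrak{b}$. The only (harmless) cosmetic differences are that you reformulate the line criterion as squareness via $\mu_{d/2}(k)\subset k^{*2}$ where the paper argues directly with valuations of the $d/2$-th powers, and you dispose of the archimedean places by noting $k$ is totally imaginary rather than citing Lemma \ref{lem:Chebotarev}.
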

\begin{proof}
	Let $\alpha$ be a rational prime which is completely split in $k$. By Lemma \ref{lem:beta},
	there exists $\beta \in \ZZ$ such that
	\begin{enumerate}
	\item $\beta$ is coprime to $\alpha$.
	\item $k$ has a prime ideal $\mathfrak{b}$ such that $v_\mathfrak{b}(\beta)=1$.
	\item If $\mathfrak{p} \mid 2\alpha$, then $\beta$ is a square in $k_{\mathfrak{p}}$.
	\item If $\mathfrak{p} \mid \beta$, then $\alpha$ is a square in $k_\mathfrak{p}$.	 	\end{enumerate}
	Consider the diagonal surface
	\begin{equation} \label{eqn:biquadratic}
		x_0^d + \alpha^{d/2} x_1^d + \beta^{d/2} x_2^d + (\alpha\beta)^{d/2} x_3^d = 0
	\end{equation}
	over $k$. Let $k \subset K$ be a field extension. As $(-1) \in k^{*d}$, using \eqref{eqn:special}
	one finds that there is a line defined $K$	if and only if
	\begin{equation} \label{eqn:biquadratic_lines}
	{\alpha^{d/2}} \in K^{*d} \quad \mbox{or} \quad {\beta^{d/2}} \in K^{*d} 
	\quad \mbox{or} \quad  (\alpha\beta)^{d/2} \in K^{*d}.
	\end{equation}
	We first show that \eqref{eqn:biquadratic} has lines everywhere locally, for which it suffices
	to consider only the non-archimedean places by Lemma \ref{lem:Chebotarev}.
	Let $\mathfrak{p}$ be a non-zero prime ideal of $k$. Then conditions $(3)$ and $(4)$,
	together with the multiplicativity of the Legendre symbol and Hensel's lemma,
	imply that one of $\alpha$,	$\beta$, or $\alpha \beta$ is a square in $k_\mathfrak{p}$. 
	Hence by \eqref{eqn:biquadratic_lines}, we see that there is a line defined over $k_\mathfrak{p}$. 
	To see that there is no
	line defined over $k$, we note that $(2)$ implies that $\beta^{d/2}$ is not a $d$th
	power in~$k$. A similar argument shows that $(\alpha\beta)^{d/2}$
	is not a $d$th power in $k$, on using $(1)$ and $(2)$.
	Moreover, $\alpha^{d/2}$ is not a $d$th power by our choice of~$\alpha$.
	Hence \eqref{eqn:biquadratic} is a counter-example to the Hasse principle for lines,
	as required.
\end{proof}

To complete the proof of Theorem \ref{thm:cohomology}, we need to construct
counter-examples when $\HH^1(k(\mu_d)/k, \mu_d) \neq 0$. By Lemma \ref{lem:pq}
there are two cases to consider, which we handle in the next two lemmata.

\begin{lemma} \label{lem:a}
	Let $d \geq 3$ and let $k$ be a number field.
	Suppose that we are in case~$(a)$ of Lemma \ref{lem:pq}.
	Then there exists a diagonal surface of degree $d$ over $k$
	which fails the Hasse principle for lines.
\end{lemma}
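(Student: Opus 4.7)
The plan is to adapt the introduction's construction for $d=pq$ to general $d$, producing an explicit diagonal surface whose Hilbert scheme violates the Hasse principle for lines. First, unpack the hypothesis of case~$(a)$: there are distinct primes $p,q \mid d$ with $p^n \mid\mid d$ such that $\HH^1(k(\mu_q)/k, \mu_{p^n}(k(\mu_q))) \neq 0$. Pick $m \leq n$ maximal with $\mu_{p^m} \subset k(\mu_q)$, so this cohomology coincides with $\HH^1(k(\mu_q)/k, \mu_{p^m})$, and by \eqref{eqn:H^1} we obtain $\alpha \in \OO_k \setminus \{0\}$ that is a $p^m$-th power in $k(\mu_q)$ but not in $k$. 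Writing $q^{n'} \mid\mid d$, apply Lemma~\ref{lem:beta} with exponents $(p^m, q^{n'})$ to produce a rational prime $\beta$ satisfying properties~$(1)$--$(4)$.

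The counter-example will be
$$S : x_0^d + ax_1^d + bx_2^d + abx_3^d = 0, \qquad a := -\alpha^{d/p^m},\quad b := -\beta^{d/q^{n'}},$$
so by \eqref{eqn:special}, $S$ contains a line over a field $K \supset k$ iff $-a \in K^{*d}$, or $-b \in K^{*d}$, or both $-ab$ and $-a/b$ lie in $K^{*d}$. Global insolubility will follow from valuation arguments: property~$(2)$ of $\beta$ makes $v_{\mathfrak{b}}(-b) = d/q^{n'}$ and $v_{\mathfrak{b}}(-ab) = d/q^{n'}$, neither divisible by $d$; and $-a = \alpha^{d/p^m} \in k^{*d}$ is ruled out by the choice of $\alpha$, modulo absorbing a harmless factor of $\mu_{d/p^m}(k)$. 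For local solubility I would treat the places separately: at archimedean places, invoke Lemma~\ref{lem:Chebotarev}; at $\mathfrak{p} \mid \alpha$, property~$(3)$ gives $\beta \in k_{\mathfrak{p}}^{*q^{n'}}$, hence $-b \in k_{\mathfrak{p}}^{*d}$; at $\mathfrak{p} \mid \beta$, property~$(4)$ gives $\alpha \in k_{\mathfrak{p}}^{*p^m}$, hence $-a \in k_{\mathfrak{p}}^{*d}$; and at every place $v$ splitting completely in $k(\mu_q)$, $\alpha$ is already a $p^m$-th power in $k_v$, so Hensel delivers $-a \in k_v^{*d}$.

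The main obstacle is the remaining non-archimedean places, namely those coprime to $\alpha\beta$ that do not split completely in $k(\mu_q)$. The plan is to fold these into the Chebotarev-type criterion of Lemma~\ref{lem:Chebotarev} applied to the splitting field of the Hilbert scheme $\mathcal{L}$: one checks that every element $\gamma \in \Gal(k(\mathcal{L})/k)$ fixes some line. Using the explicit presentation \eqref{eqn:lines}, this boils down to a case analysis of how $\gamma$ acts on fixed choices of $\sqrt[d]{\alpha}$, $\sqrt[d]{\beta}$ and on $\mu_{2d}$, exploiting that a $p^m$-th root of $\alpha$ generates a subextension of $k(\mu_q)$ so that $\gamma$'s action on it is pinned down by its action on $\mu_q$. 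A related nuisance is that the conditions ``$-a \in K^{*d}$'' and ``$\alpha \in K^{*p^m}$'' only agree modulo $\mu_{d/p^m}(K)$, whose $p$-primary part $\mu_{p^{n-m}}$ requires careful bookkeeping when $m < n$; I expect this to be the most delicate step of the proof.
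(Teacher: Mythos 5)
Your construction tracks the paper's quite closely -- the same surface shape $x_0^d+ax_1^d+bx_2^d+abx_3^d$, the same use of Lemma~\ref{lem:beta}, and the same valuation-at-$\mathfrak{b}$ argument for global insolubility -- but there is a genuine gap at the step you yourself identify as ``the main obstacle'': local solubility at the places coprime to $\alpha\beta q$ that do not split completely in $k(\mu_q)$. Your plan is to fold these into Lemma~\ref{lem:Chebotarev} by checking that every element of $\Gal(k(\mathcal{L})/k)$ fixes a line. Even granting that this case analysis succeeds (it can be made to: an element acting nontrivially on $\mu_q$ fixes a unique $q^{n'}$-th root of $\beta$ because multiplication by $c-1$ is invertible on $\mu_{q^{n'}}$ when $c\not\equiv 1 \bmod q$), Lemma~\ref{lem:Chebotarev} only delivers $k_v$-points at archimedean and \emph{unramified} places. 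The places lying over $2d/q^{n'}$ -- for instance over $p$, or over $2$ -- are in general ramified in the splitting field, are not covered by conditions $(3)$ and $(4)$ of Lemma~\ref{lem:beta} (which only reach $\mathfrak{p}\mid\alpha q^{n'}$ and $\mathfrak{p}\mid\beta$), and need not split in $k(\mu_q)$; at such a place the decomposition group is not cyclic in general, so ``every element fixes a line'' does not give a $k_v$-point. The missing idea is the paper's elementary observation that $\beta$ is a \emph{rational} integer: at any finite place $\mathfrak{p}\nmid q\beta$ whose residue characteristic $\ell$ satisfies $\ell\not\equiv 1\bmod q$, the $q$-th power map is surjective on $\ZZ_\ell^*$, so $\beta\in\QQ_\ell^{*q}\subset k_{\mathfrak{p}}^{*q}$ automatically. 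Combined with the complementary case $\ell\equiv 1\bmod q$ (where $\mu_q\subset\QQ_\ell$ forces $\mathfrak{p}$ to split in $k(\mu_q)$ and the $\alpha$-coordinate works), this gives a clean dichotomy covering \emph{every} finite place, ramified or not.

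A secondary issue is your choice of exponents. The paper takes $\alpha$ representing a class in $\HH^1(k(\mu_q)/k,\mu_{p^n}(k(\mu_q)))$ and reads \eqref{eqn:H^1} as saying $\alpha\in k(\mu_q)^{*p^n}\setminus k^{*p^n}$, then uses the exponent $d/p^n$, which is coprime to $p$; the deduction ``$\alpha^{d/p^n}\in k^{*d}\Rightarrow\alpha=\zeta\gamma^{p^n}$ with $\zeta\in\mu_{d/p^n}(k)\subset k^{*p^n}$'' is then immediate. Your exponent $d/p^m$ is divisible by $p^{n-m}$, so the root of unity $\zeta\in\mu_{d/p^m}(k)$ has a $p$-primary part that is not obviously a $p^m$-th power in $k$. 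This can be repaired (such a $\zeta$ would have to be a $p^m$-th power in $k(\mu_q)$, forcing its $p$-part to be trivial by maximality of $m$), but you have only flagged it, not resolved it, and the paper's normalisation avoids it entirely. As written, neither the global non-solubility of $\mathcal{L}_1$ nor the local solubility at all bad places is actually established.
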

\begin{proof}
	By Lemma \ref{lem:pq}, we may write $d=eqp^n$
	where $p,q$ are primes with $\gcd(eq,p)=1$, and 
	$$\HH^1(k(\mu_q)/k, \mu_{p^n}(k(\mu_q)) \neq 0.$$
	In particular $q \neq 2$.
	Let $\alpha \in \OO_k$ be a representative of a non-trivial element of 
	$\HH^1(k(\mu_q)/k, \mu_{p^n}(k(\mu_q)))$ of order $p$.
	By Lemma \ref{lem:beta}, we may choose $\beta \in \ZZ$ such that
	\begin{enumerate}
	\item $\beta$ is coprime to $\alpha$.
	\item $k$ has a prime ideal $\mathfrak{b}$ such that $v_\mathfrak{b}(\beta)=1$.
	\item If $\mathfrak{p} \mid \alpha q$, then $\beta$ is a $q$th power in $k_{\mathfrak{p}}$.
	\item If $\mathfrak{p} \mid \beta$, then $\alpha$ is a $p^n$th power in $k_\mathfrak{p}$.	 
	\end{enumerate}
	We claim that the surface
	$$S: \quad  x_0^{d} - \alpha^{eq} x_1^{d} - \beta^{ep^n} x_2^{d} + \alpha^{eq} \beta^{ep^n} x_3^{d} = 0$$	
	is a counter-example to the Hasse principle for lines. To see this,
	we use the description of the Hilbert scheme of lines given in \eqref{eqn:special}. We first
	show that $S$ contains lines everywhere locally, for which it suffices
	to consider only the non-archimedean places by Lemma \ref{lem:Chebotarev}. For those primes with
	$\mathfrak{p} \mid \alpha q \beta$, locally solubility in $k_\mathfrak{p}$ follows from $(3)$ and $(4)$ above.
	For the remaining primes $\mathfrak{p}$, let $\ell$ be the rational prime below $\mathfrak{p}$.
	If $\ell \equiv 1 \bmod q$ then $\mathfrak{p}$ splits completely in $k(\mu_q)$, hence $\alpha$
	is a $p^n$th power in $k_\mathfrak{p}$ by \eqref{eqn:H^1}, thus $\alpha^{eq}$ is a $d$th power in $k_\mathfrak{p}$.
	Next, as $\beta$ is a unit in $\ZZ_\ell$, we see that if $\ell \not \equiv 1 \bmod q$ then
	$\beta$ is a $q$th power in $\QQ_\ell \subset k_\mathfrak{p}$, 
	hence $\beta^{ep^n}$ is a $d$th power in $k_\mathfrak{p}$.
	Thus $S$ has lines everywhere locally.
	
	We now show that there is no line over $k$. First assume that $\alpha^{eq}$ is a $d$th power.
	In which case, there exists $\zeta \in \mu_{eq}(k)$ such that $\zeta\alpha \in k^{*p^n}$.
	However $\zeta \in k^{*p^n}$ since $\gcd(eq,p)=1$,
	thus $\alpha \in k^{*p^n}$, contradicting our choice of~$\alpha$ (see \eqref{eqn:H^1}).
	Next $(2)$ implies that $\beta^{ep^n}$ is not a $d$th power in~$k$,
	and a similar argument (which is valid because of $(1)$)
	shows that $-\alpha^{eq}\beta^{ep^n}$ and $-\alpha^{eq}/\beta^{ep^n}$ 
	are also not $d$th powers. Thus there is no line over $k$, as required.
\end{proof}

\begin{lemma} \label{lem:b}
	Let $d \geq 3$ and let $k$ be a number field.
	Suppose that we are in case~$(b)$ of Lemma \ref{lem:pq}.
	Then there exists a diagonal surface of degree $d$ over $k$
	which fails the Hasse principle for lines.
\end{lemma}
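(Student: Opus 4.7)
The plan is to mimic Lemma \ref{lem:a}, replacing the cyclotomic extension $k(\mu_q)$ by $k(\mu_{2^n})$ itself; in case $(b)$ the element $\alpha$ already becomes a $2^n$th power after adjoining $\mu_{2^n}$, so no auxiliary prime $q$ is required.

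Writing $d = 2^n e$ with $n \geq 2$ and $e$ odd, Lemma \ref{lem:GWW} gives $\HH^1(k(\mu_{2^n})/k,\mu_{2^n}) \cong \ZZ/2\ZZ$, and I would choose $\alpha \in \OO_k$ representing its non-trivial class, so that $\alpha \notin k^{*2^n}$ but $\alpha \in k(\mu_{2^n})^{*2^n}$ by \eqref{eqn:H^1}. I would then apply Lemma \ref{lem:beta} with $p = 2^n$ and $q = d$ to produce a rational prime $\beta$, coprime to $\alpha$, admitting a prime $\mathfrak{b}$ of $k$ with $v_\mathfrak{b}(\beta) = 1$, such that $\beta \in k_\mathfrak{p}^{*d}$ whenever $\mathfrak{p} \mid \alpha d$, and $\alpha \in k_\mathfrak{p}^{*2^n}$ whenever $\mathfrak{p} \mid \beta$. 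The counter-example surface is then
$$S: \quad x_0^d - \alpha^e x_1^d - \beta^{e 2^{n-1}} x_2^d + \alpha^e \beta^{e 2^{n-1}} x_3^d = 0,$$
which is of the special shape \eqref{eqn:special}; in the case $d = 4$ with $\alpha = -4$ and $\beta$ a rational prime $\equiv 1 \bmod 8$ this recovers the counter-example from the introduction.

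For local solubility I would use the explicit description \eqref{eqn:special} and argue place by place. Archimedean places are handled by $\beta^{e 2^{n-1}} > 0$, which is always a $d$th power in $\RR$. At a finite prime $\mathfrak{p} \mid \alpha d$, the property $\beta \in k_\mathfrak{p}^{*d}$ immediately gives $\beta^{e 2^{n-1}} \in k_\mathfrak{p}^{*d}$ and hence a point on $\mathcal{L}_2$; at $\mathfrak{p} \mid \beta$, the property $\alpha \in k_\mathfrak{p}^{*2^n}$ gives $\alpha^e \in k_\mathfrak{p}^{*d}$ and a point on $\mathcal{L}_1$. For the remaining primes $\mathfrak{p} \nmid \alpha\beta d$ I split on whether $\mu_{2^n} \subset k_\mathfrak{p}$: in the positive case the embedding $k(\mu_{2^n}) \hookrightarrow k_\mathfrak{p}$ makes $\alpha$ a $2^n$th power; in the negative case $v_2(N\mathfrak{p}-1) < n$, whence $\gcd(d, N\mathfrak{p}-1) = \gcd(e 2^{n-1}, N\mathfrak{p}-1)$ and the subgroups $(\kappa(\mathfrak{p})^*)^{e 2^{n-1}}$ and $(\kappa(\mathfrak{p})^*)^d$ of the cyclic group $\kappa(\mathfrak{p})^*$ coincide, so that $\beta^{e 2^{n-1}}$ is a $d$th power modulo $\mathfrak{p}$ and then, by Hensel's lemma (applicable since $\mathfrak{p} \nmid d$), also in $k_\mathfrak{p}$.

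Insolubility over $k$ follows the pattern of Lemma \ref{lem:a}. A $k$-point on $\mathcal{L}_1$ would give $\alpha^e \in k^{*d}$, and since $\gcd(e, 2^n) = 1$ forces every element of $\mu_e(k)$ to be a $2^n$th power in $k$, this would imply $\alpha \in k^{*2^n}$, contradicting the choice of $\alpha$. For $\mathcal{L}_2$ and $\mathcal{L}_3$ the valuation at $\mathfrak{b}$ suffices: both $\beta^{e 2^{n-1}}$ and $-\alpha^e \beta^{e 2^{n-1}}$ have $\mathfrak{b}$-valuation $e 2^{n-1}$, which is not a multiple of $d = e 2^n$. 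The step I expect to be the main obstacle is the unramified sub-case $\mathfrak{p} \nmid \alpha\beta d$ with $\mu_{2^n} \not\subset k_\mathfrak{p}$, where one must exploit the cyclicity of $\kappa(\mathfrak{p})^*$ together with the sharp bound $v_2(N\mathfrak{p}-1) < n$ forced by non-splitting in order to collapse $(e 2^{n-1})$-th and $d$th powers in the residue field.
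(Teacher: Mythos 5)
Your proposal is correct and follows essentially the same route as the paper: the same surface $x_0^d - \alpha^e x_1^d - \beta^{d/2}x_2^d + \alpha^e\beta^{d/2}x_3^d = 0$, the same use of Lemma \ref{lem:beta} to handle the bad primes, the same split/non-split dichotomy for the remaining primes, and the same valuation-at-$\mathfrak{b}$ and $\mu_e(k)\subset k^{*2^n}$ arguments for global insolubility. The only deviations are cosmetic (working in $\kappa(\mathfrak{p})$ rather than $\FF_\ell$, and treating archimedean places directly instead of via Lemma \ref{lem:Chebotarev}), and both are sound.
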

\begin{proof}
	The proof runs along similar lines to the proof of Lemma \ref{lem:a}.
	First by Lemma~\ref{lem:pq}, we may write $d=2^ne$ with $e$ odd and
	such that $(k,2^n)$ is the special case.
	Let $\alpha \in \OO_k$ be a representative of the non-trivial element of 
	$\HH^1(k(\mu_{2^n})/k, \mu_{2^n})$.	By Lemma \ref{lem:beta}, we may choose $\beta \in \ZZ$ such that
	\begin{enumerate}
	\item $\beta$ is coprime to $\alpha$.
	\item $k$ has a prime ideal $\mathfrak{b}$ such that $v_\mathfrak{b}(\beta)=1$.
	\item If $\mathfrak{p} \mid 2\alpha$, then $\beta$ is a square in $k_{\mathfrak{p}}$.
	\item If $\mathfrak{p} \mid \beta$, then $\alpha^e$ is a $d$th power in $k_\mathfrak{p}$.	 
	\end{enumerate}
	We claim that the surface
	$$\quad  x_0^{d} - \alpha^{e} x_1^{d} - \beta^{d/2} x_2^{d} + \alpha^{e} \beta^{d/2} x_3^{d} = 0$$	
	is a counter-example to the Hasse principle for lines.
	For local solubility, those primes with
	$\mathfrak{p} \mid 2\alpha  \beta$ are taken care of using $(3)$ and $(4)$.
	For the remaining primes~$\mathfrak{p}$, let $\ell$ be the rational prime below $\mathfrak{p}$.
	If $\ell \equiv 1 \bmod 2^n$, then $\mathfrak{p}$ splits completely in $k(\mu_{2^n})$, hence $\alpha$
	is a $2^n$th power in $k_\mathfrak{p}$ by \eqref{eqn:H^1}, thus $\alpha^{e}$ is a $d$th power in $k_\mathfrak{p}$.
	Next if $\ell \not \equiv 1 \bmod 2^n$, then every element of
	$\FF_{\!\ell}$ which is a $(d/2)$th power is also a $d$th power,
	since $\gcd(\ell-1, d) \mid (d/2)$. As $\beta$ is a unit in $\ZZ_\ell$, we therefore 
	see that $\beta^{d/2}$ is a $d$th power in $\QQ_\ell \subset k_\mathfrak{p}$, as required.
	Thus $S$ has lines everywhere locally.
	
	We now show that there is no line. Suppose that $\alpha^e \in k^{*d}$. Then there exists
	$\zeta \in \mu_e(k)$ such that $\zeta \alpha \in k^{*2^n}$. However $\zeta \in k^{*2^n}$
	as $e$ is odd, thus $\alpha \in k^{*2^n}$; a contradiction.
	As in the proof of Lemma \ref{lem:a}, the remaining cases follow from $(1)$ and $(2)$.
\end{proof}

This completes the proof of Theorem \ref{thm:cohomology}. \qed

\begin{remark}
	A simple calculation using the description \eqref{eqn:general} shows that the Hilbert scheme of lines of a diagonal
	surface is a torsor under a certain non-abelian group scheme $G$, which
	fits into an exact sequence
	$$0 \to \mu_d^2 \to G \to \ZZ/3\ZZ \to 0.$$
	In particular, our results give explicit examples of non-trivial elements
	of the associated non-abelian Tate-Shafarevich set
	$$\Sha(k,G) = \ker\left( \HH^1(k,G) \longrightarrow \prod_{\mathclap{v \in \Val(k)}} \HH^1(k_v, G)\right).$$
\end{remark}

\begin{remark}
	Let $d$ be even and let $k$ be a number field with $\mu_{2d} \subset k$.
	Then a simple modification of the proof of Proposition \ref{prop:mu_d}
	shows that every diagonal surface of degree $d$ over $k$ which
	fails the Hasse principle for lines is of the form \eqref{eqn:biquadratic}.
	In particular, its splitting field has Galois group $(\ZZ/2\ZZ)^2$.
\end{remark}

\subsection{Proof of Theorem \ref{thm:odd_fail}}
Write $d = eqp^n$ with $\gcd(eq,p)=1$ and take $k = \QQ(\mu_{p^n})$.
As $q \equiv 1 \bmod p$, applying Lemma \ref{lem:non-trivial}(a) we obtain
$$\HH^1(k(\mu_d)/k, \mu_{p^n}) \neq 0.$$
Hence by Theorem~\ref{thm:cohomology}, the Hasse principle for lines can fail here.
\qed

\subsection{Proof of Theorem \ref{thm:even_fail}}
Let $k$ be totally real and write $d = 2^ne$.
First suppose that $n = 1$. Then, as $d \geq 3$ and $k$ is totally real,
the group $\Gal(k(\mu_d)/k)$ contains an element of order $2$ (e.g.~a choice of complex
conjugation for some embedding $k(\mu_d) \subset \CC$).
In particular $2 \mid [k(\mu_d): k]$, hence 
$\HH^1(k(\mu_d)/k, \mu_2) \neq~0$ by Lemma \ref{lem:non-trivial}$(a)$.
If however $n \geq 2$, then $(k,2^n)$ is the special case
(see Definition~\ref{def:special}),
hence $\HH^1(k(\mu_d)/k, \mu_{2^n}) \neq0$ by Lemma \ref{lem:non-trivial}$(b)$.

In both cases, Proposition \ref{prop:cohomology}$(a)$ implies that $\HH^1(k(\mu_d)/k, \mu_d) \neq 0$.
Thus the Hasse principle for lines can fail here by Theorem~\ref{thm:cohomology}, as required.
\qed

\end{document}